\title{Higher gentle algebras}
\author{Jordan McMahon}
\address{Institut f\"ur Mathematik und Wissenschaftliches Rechnen\\ Universit\"at Graz,
Heinrichstrasse 36, A-8010 Graz, Austria}
\begin{document}

\newtheorem{lm}{Lemma}[section]
\newtheorem{prop}[lm]{Proposition}
\newtheorem{conj}[lm]{Conjecture}
\newtheorem{cor}[lm]{Corollary}
\newtheorem{theorem}[lm]{Theorem}

\theoremstyle{definition}
\newtheorem{eg}{Example}
\newtheorem{defn}{Definition}
\newtheorem{remark}{Remark}
\newtheorem{qu}{Question}

\begin{abstract}
We introduce higher gentle algebras. Our definition allows us to determine the singularity categories and subsequently show that higher gentle algebras are Iwanaga-Gorenstein. Under extra assumptions, we show that cluster-tilted algebras (in the sense of Oppermann-Thomas) of higher Auslander algebras of type $A$ are higher gentle.
\end{abstract}

\maketitle
\tableofcontents
\newcommand{\sub}{\underline}

\section{Introduction}

Gentle algebras were introduced in \cite{as}, as a class of the special-biserial algebras introduced in \cite{sw}. Specifically, gentle algebras encompass the tilted algebras of type $A_n$ and $\tilde{A}_n$. Since then, gentle algebras have appeared naturally in many other contexts; prominent sources include triangulations of surfaces \cite{abcp}, \cite{lf}, tilings of surfaces \cite{drs} \cite{csp}, as well as $m$-Calabi-Yau tilted algebras \cite{else} and Brauer graph algebras \cite{schroll}. More general models have been attained recently \cite{bcs}, \cite{ops}, \cite{ppp}.  
Generalisations of special-biserial algebras and gentle exist in the literature, such as almost-gentle algebras \cite{gs2}, skew-gentle algebras \cite{gdp} and special-multiserial algebras \cite{gs}, \cite{vHW}.
As a natural generalisation of gentle algebras to higher Auslander-Reiten theory, we define higher gentle algebras in Definition \ref{genttle}. 

Gentle algebras are Iwanaga-Gorenstein \cite{gr}, and their singularity categories were described by Kalck in \cite{ka}. On the other hand, singularity categories for Nakayama algebras were calculated by Chen-Ye in \cite{cy}.  Gorenstein-projective modules were classified for monomial algebras in \cite{csz}, to some extent unifying the previous results. Related descriptions of singularity categories and Gorenstein projective modules can be found in  \cite{chenquad}, \cite{cgl}, \cite{cl}, \cite{lu}, \cite{lz}, \cite{ring}, \cite{shen}.

The strategy of Chen-Ye is to find simple modules with projective dimension at most one, and then contract an associated arrow.
This follows the work of Chen-Krause \cite{ck}, while contractions of subquivers of type $A$ in a path algebra were considered in \cite{hkp}. The analysis of the simple modules having projective dimension at most one is also of interest for the contraction of arrows as a reduction technique for the finitistic dimension \cite{bp}, \cite{fs}, \cite{gps}, see also the work of Xi on idempotents and finitistic dimension \cite{xi}. 

The singularity categories of higher Nakayama algebras were classified in \cite{mc2}. To extend the strategy for classical Nakayama algebras to higher Nakayama algebras, the goal is to analyse idempotent ideals rather than simple modules - the role of a simple module of projective dimension at most one is subsumed by that of a fabric idempotent. The existence of fabric idempotents allows us to calculate singularity categories for higher gentle algebras.

\begin{theorem}[Theorem \ref{elso}]
Let $A$ be a $d$-gentle algebra. There exists a product of fabric idempotents $f$ such that $fAf$ is a gentle algebra and such that there is an equivalence of categories $$D_{\mathrm{sg}}(A)\cong D_{\mathrm{sg}}(fAf).$$
\end{theorem}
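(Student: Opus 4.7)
The plan is to follow the Chen--Ye reduction strategy outlined in the introduction, with the role of simple modules of projective dimension at most one replaced by fabric idempotents, as successfully implemented for higher Nakayama algebras in \cite{mc2}. The starting observation is that for any idempotent $e$ in an Artin algebra $A$ such that the ideal $AeA$ has a suitable ``fabric'' structure (so that $A/AeA$ is perfect over $A$, or equivalently the change-of-rings functors induce an equivalence of singularity categories), one obtains $D_{\mathrm{sg}}(A)\cong D_{\mathrm{sg}}(fAf)$ where $f = 1-e$. This is essentially a higher-dimensional avatar of the Chen--Krause reduction \cite{ck}.

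First I would carry out the combinatorial analysis: using the defining data of a $d$-gentle algebra (the higher analogue of a gentle quiver with relations introduced in Definition \ref{genttle}), locate the fabric idempotents in $A$. The expectation is that there is a canonical set of such idempotents, indexed by those vertices (or more generally sub-configurations) of the higher-dimensional quiver that play the role of ``interior'' vertices lying on a maximal permitted path between two relations. Let $e$ be the sum of all such idempotents and set $f = 1-e$. One then needs to verify the hypotheses that make $e$ a fabric idempotent in the precise sense of \cite{mc2}, so that the singularity equivalence $D_{\mathrm{sg}}(A) \cong D_{\mathrm{sg}}(fAf)$ is available. If a single step is insufficient, the process is iterated: each contraction strictly decreases the number of vertices, so the procedure terminates.

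Second, I would check that the resulting algebra $fAf$ is classically gentle. This is a bookkeeping step: contraction by a fabric idempotent corresponds combinatorially to deleting a vertex and composing the arrows that ran through it, while keeping track of which compositions remain permitted paths and which become relations. The task is to verify that the admissible/forbidden path structure of a $d$-gentle algebra degenerates under these contractions to the axioms (G1)--(G4) of a gentle algebra in the sense of \cite{as}. Because $d$-gentle algebras are designed as a higher analogue, one expects each local configuration to reduce to the expected gentle one without introducing extraneous relations or higher-length monomial ideals.

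The main obstacle will be the combinatorial step, namely showing that the fabric idempotents can be chosen so that the contraction process both produces a genuinely gentle (not merely monomial or special biserial) algebra and is compatible with the singularity equivalence at every stage. In particular one has to rule out the appearance of new relations that would spoil the gentle axioms, and to guarantee that no fabric idempotents are needed beyond those that the definition of a $d$-gentle algebra allows us to detect. The singularity equivalence itself, once the fabric idempotent property is verified, should then follow formally from the general machinery developed in \cite{mc2} and \cite{ck}.
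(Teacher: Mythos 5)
Your proposal correctly identifies the overall strategy --- iterated reduction by fabric idempotents in the sense of \cite{mc2}, terminating in a gentle algebra --- but it defers the entire substance of the argument to ``the main obstacle,'' namely locating the fabric idempotents and verifying that the contractions land on a gentle algebra. That obstacle is precisely what the paper's proof consists of, and your sketch of how to overcome it points in a slightly wrong direction: the reduction does not contract ``interior vertices on maximal permitted paths between two relations,'' it targets the commutativity relations themselves. Given a commutative square on vertices $a,b,c,d$ with paths $w_1,w_2,w_3,w_4$, one shows that if there is no surjection $I_b\twoheadrightarrow I_a$ then there is a zero relation $w_1w_x\in I$ entering at $a$, and then the axiom forbidding a commutativity relation sandwiched by zero relations forces inclusions $P_d\hookrightarrow P_c\hookrightarrow P_a$: any failure of these inclusions would produce a second zero relation on the far side of the square and hence a forbidden sandwich. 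The module $M:=\mathrm{coker}(P_d\rightarrow P_c)$ then determines, via the smallest idempotent $f$ with $M\in\mathrm{proj}(A/\langle f\rangle)$, a fabric idempotent by \cite[Proposition 3.2]{mc2}; the inclusion $P_c\hookrightarrow P_a$ rules out cycles through the support of $M$, giving $\mathrm{gl.dim}(A/\langle f\rangle)<\infty$ so that Corollary \ref{fabthm} applies. None of this appears in your outline, and without invoking the sandwiching axiom there is no reason the fabric idempotents you posit should exist at all.

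Two further steps are missing. First, after each contraction you must check that no zero relations of length greater than two are created; your ``bookkeeping step'' asserts this, but the verification is exactly where axiom (A4) and the no-sandwiching condition enter (compare Figure \ref{picture} with the vertex $c$ removed). Second, a $d$-gentle algebra is by definition a corner algebra $eAe$ of a $d$-pre-gentle algebra $A$, and such a corner algebra can a priori contain zero relations of length greater than two; the paper handles this by tracing any such relation back to a commutativity relation of $A$ neighbouring a length-two relation and checking that the same fabric-idempotent construction is compatible with passing to $eAe$. Your proposal does not distinguish the pre-gentle and gentle cases, so as written the argument would only address the pre-gentle case even if the combinatorial obstacle were resolved.
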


For an algebra $\Lambda$ of type $\vec{A}$, any rigid $\Lambda$ module $M$ gives rise to a gentle algebra $\mathrm{End}_{\Lambda}(M)^\mathrm{op}$ \cite{csp}. This may be generalised.

\begin{theorem}[Corollary \ref{boo}]
Let $\Lambda$ be a $d$-Auslander algebra of type $\vec{A}$ and $T$ a $d$-rigid $\Lambda$-module such that $\mathrm{add}(T)\subseteq\mathcal{C}\subseteq \mathrm{mod}(\Lambda)$, where $\mathcal{C}$ is the canonical $d$-cluster-tilting subcategory. Then $\mathrm{End}_\Lambda(T)^\mathrm{op}$ is a $d$-gentle algebra.
\end{theorem}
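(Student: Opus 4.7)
The plan is to exploit the combinatorial model of Oppermann--Thomas for the $d$-Auslander algebra $\Lambda$ of type $\vec{A}$: indecomposable objects of the canonical $d$-cluster-tilting subcategory $\mathcal{C}$ are indexed by certain $(d+1)$-element subsets of a totally ordered set (equivalently, by internal $d$-simplices of a cyclic polytope), with $\mathrm{Hom}$- and $\mathrm{Ext}^d$-spaces described combinatorially. Under this identification, the hypothesis $\mathrm{add}(T)\subseteq\mathcal{C}$ says each indecomposable summand of $T$ corresponds to such a subset, and $d$-rigidity of $T$ translates into a non-crossing (compatibility) condition on the associated collection of subsets.

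First I would read off the quiver of $B:=\mathrm{End}_\Lambda(T)^\mathrm{op}$. Its vertices are the indecomposable summands of $T$, and its arrows are the irreducible morphisms in $\mathrm{add}(T)$. Using the explicit description of morphism spaces in $\mathcal{C}$, each arrow should correspond to a minimal adjacency step between two subsets in the non-crossing collection, precisely analogous to the way arrows in a classical cluster-tilted algebra of type $A$ arise from diagonals of a triangulated polygon sharing a vertex.

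Next I would extract the relations of $B$ by analysing which compositions of irreducible maps vanish or factor through objects outside $\mathrm{add}(T)$. Again the combinatorics of the non-crossing collection dictates the answer, and I expect the relations to be monomial of a controlled form, coming from the fact that a pair of composable arrows corresponds to a pair of coherent subset moves which may or may not compose to a legal move within $\mathrm{add}(T)$.

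The main obstacle will be the last step: matching this bound-quiver description to Definition~\ref{genttle} of a $d$-gentle algebra. Concretely this requires locating the fabric idempotents inside $B$, verifying the higher analogues of the classical gentle axioms on the numbers of arrows meeting each vertex and on the compositions of arrows, and confirming that the relations take the prescribed form. In the case $d=1$ this recovers the known fact that cluster-tilted algebras of type $\vec{A}$ are gentle (via triangulations of polygons), and the argument for general $d$ should proceed by the same bookkeeping on the cyclic polytope, with the earlier Theorem~\ref{elso} providing the structural framework needed to recognise the fabric idempotents.
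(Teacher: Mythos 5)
There is a genuine gap: your plan does not engage with the actual structure of Definition~\ref{genttle}. A $d$-gentle algebra is \emph{not} defined by a list of axioms to be checked on $B$ itself; it is defined as an algebra of the form $eAe$ for some $d$-pre-gentle algebra $A$, subject to the additional condition that no idempotent subalgebra of $B$ contains a $(d+1)$-cube. Your proposal to ``read off the quiver and relations of $B$'' and then ``verify the higher analogues of the classical gentle axioms'' on $B$ directly would fail in general: passing to $eAe$ can create relations of length greater than two, so $B$ need not satisfy axiom (A4) and need not be $d$-pre-gentle, even when it is $d$-gentle. The paper's proof sidesteps all of this bookkeeping. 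It takes $A$ to be the endomorphism algebra of the full additive generator of $\mathcal{C}$ (which is the $(d+1)$-Auslander algebra $A^{d+1}_n$, asserted to be $d$-pre-gentle), takes $e$ to be the idempotent corresponding to the indecomposable summands of $T$ so that $B\cong eAe$, and then observes that the remaining condition --- no $(d+1)$-cube in any $fBf$ --- is exactly what $d$-rigidity of $T$ guarantees, since by Theorem~\ref{to} the vanishing of $\mathrm{Ext}^d$ is the non-intertwining condition, and an intertwining pair is precisely what would produce such a cube. That last identification is the one substantive point of the proof, and it is absent from your outline.

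A second, smaller confusion: fabric idempotents play no role in Definition~\ref{genttle} or in this corollary. They are the tool used in Theorem~\ref{elso} to compute singularity categories of algebras already known to be $d$-gentle; ``locating the fabric idempotents inside $B$'' is not part of verifying $d$-gentleness, and invoking Theorem~\ref{elso} here gets the logical dependency backwards.
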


The behaviour of higher cluster-tilting subcategories differs significantly from that of module categories. For example, the number of simple modules in the $d$-cluster-tilting subcategory of the module category of a $d$-Auslander algebra of type $\vec{A}$ is independent of the value of $d$. In general, the members of such a $d$-cluster-tilting subcategory do not, however, have a filtration by these simple modules. In this sense, the $d$-cluster-tilting subcategories a $d$-Auslander algebra of type $\vec{A}$ behave like module categories only to some extent; to this same extent we are able to produce higher gentle algebras.

For the following result, let $A^d_n$ be the $d$-Auslander algebra of type $\vec{A}_n$ with $\mathcal{C}\subseteq \mathrm{mod}(A^d_n)$ the canonical $d$-cluster-tilting subcategory and $\mathcal{O}_{A^d_n}$ the $(d+2)$-angulated cluster category of $A^d_n$.

\begin{prop}[Corollary \ref{simplytoogood}]
Let $S$ be a semisimple $A^d_n$-module in $\mathcal{C}$ such that $\mathrm{Ext}^d_{A^d_n}(S,S)=0$. Let $P$ be a basic projective $A^d_n$-module such that $\mathrm{Ext}^d_{A^d_n}(S,P)=0$ and set $T:=P\oplus \tau^{-1}_d(S)$. Then $\mathrm{End}_{\mathcal{O}_{A^d_n}}(T)^\mathrm{op}$ is a $d$-gentle algebra.
\end{prop}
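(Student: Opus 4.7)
The strategy is to reduce the statement to Corollary~\ref{boo}, which produces $d$-gentle algebras as endomorphism algebras of $d$-rigid modules lying in the canonical $d$-cluster-tilting subcategory $\mathcal{C}\subseteq\mathrm{mod}(A^d_n)$. Concretely, I would lift $T$ to the $A^d_n$-module $T':=P\oplus\tau^{-1}_d(S)$, where $\tau^{-1}_d(S)$ is now interpreted inside $\mathrm{mod}(A^d_n)$, after discarding any injective summands of $S$ (which are sent to zero by $\tau^{-1}_d$). Since $\mathcal{C}$ contains every projective and is closed under $\tau^{-1}_d$ on non-injective objects, $\mathrm{add}(T')\subseteq\mathcal{C}$.

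The second step is to verify that $T'$ is $d$-rigid. The summands $\mathrm{Ext}^d(P,P)$ and $\mathrm{Ext}^d(P,\tau^{-1}_d S)$ of $\mathrm{Ext}^d_{A^d_n}(T',T')$ vanish by projectivity of $P$. The summand $\mathrm{Ext}^d(\tau^{-1}_d S,\tau^{-1}_d S)$ is identified with $\mathrm{Ext}^d(S,S)$ via the equivalence induced by the higher Auslander-Reiten translate on the stable quotient of $\mathcal{C}$, and thus vanishes by the first hypothesis. For the cross-term $\mathrm{Ext}^d(\tau^{-1}_d S,P)$ I would apply higher Auslander-Reiten duality to rewrite it as the dual of a stable Hom from $P$ into $S$, and then deduce its vanishing from the hypothesis $\mathrm{Ext}^d_{A^d_n}(S,P)=0$ together with the structure of morphisms in the cluster category $\mathcal{O}_{A^d_n}$ between $P$ and $S$.

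With $T'$ shown to be $d$-rigid, Corollary~\ref{boo} applies directly to give that $\mathrm{End}_{A^d_n}(T')^{\mathrm{op}}$ is $d$-gentle. The remaining identification $\mathrm{End}_{\mathcal{O}_{A^d_n}}(T)^{\mathrm{op}}\cong\mathrm{End}_{A^d_n}(T')^{\mathrm{op}}$ should follow from the fact that, in the Oppermann-Thomas cluster category, the space of morphisms between two objects of $\mathcal{C}$ decomposes as the module-category Hom plus a summand controlled by $\mathrm{Ext}^d$; the latter is killed by $d$-rigidity of $T'$. The main obstacle is precisely this final identification: one must carefully track how any injective summands of $S$ are encoded outside the image of $\mathrm{mod}(A^d_n)\hookrightarrow\mathcal{O}_{A^d_n}$, and ensure that none of the higher-morphism contributions in the $(d+2)$-angulated structure produces unwanted endomorphisms that would prevent the reduction to the module-category case.
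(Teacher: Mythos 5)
There is a genuine gap, and it is exactly at the point you flag as ``the main obstacle'': the identification $\mathrm{End}_{\mathcal{O}_{A^d_n}}(T)^{\mathrm{op}}\cong\mathrm{End}_{A^d_n}(T')^{\mathrm{op}}$ does not hold, and the mechanism you propose for it is incorrect. Morphism spaces in the orbit category $\mathcal{O}_{A^d_n}=\mathcal{U}_{A^d_n}/\Sigma^{-d}\mathbb{S}_d$ between objects $X,Y$ of $\mathcal{C}$ decompose as $\mathrm{Hom}_{A^d_n}(X,Y)$ plus a summand of the form $\mathrm{Hom}_{\mathcal{U}}(X,\Sigma^{-d}\mathbb{S}_d Y)$; this extra summand is \emph{not} $\mathrm{Ext}^d_{A^d_n}(X,Y)$ and is \emph{not} killed by $d$-rigidity ($d$-rigidity kills $\mathrm{Hom}_{\mathcal{O}}(T,T[d])$, a different space). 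Already for $d=1$ this is the familiar fact that a cluster-tilted algebra is the relation-extension of the corresponding tilted algebra, not the tilted algebra itself (e.g.\ the $3$-cycle cluster-tilted algebra of type $A_3$ versus linear $A_3$ with a zero relation). The paper's own example in Section \ref{sec3} exhibits these extra arrows explicitly: the chain $179\to 579\to 157\to 357\to 135$ in $\mathrm{End}_{\mathcal{O}_{A^2_5}}(T)^{\mathrm{op}}$ does not come from module homomorphisms over $A^2_5$. Consequently Corollary \ref{boo}, which concerns the module-category endomorphism algebra of a $d$-rigid module, cannot be invoked to conclude anything about the cluster-category endomorphism algebra; verifying $d$-rigidity of $T'$ in $\mathrm{mod}(A^d_n)$ does not address the object the statement is actually about.

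The paper's route is different and avoids this issue. It first observes that a semisimple $S\in\mathcal{C}$ with $\mathrm{Ext}^d_{A^d_n}(S,S)=0$ corresponds to a subset $\mathcal{I}\subset\{1,\ldots,n\}$ with no two cyclically adjacent elements, so that $S=\bigoplus_{i\in\mathcal{I}}S_{I_i}$. Proposition \ref{ctgent} then analyzes the quiver with relations of $\mathrm{End}_{\mathcal{O}_{A^d_n}}\bigl(A^d_n\oplus\bigoplus_{i\in\mathcal{I}}\tau_d^{-1}S_{I_i}\bigr)^{\mathrm{op}}$ directly \emph{in the cluster category}, including the extra orbit-category arrows, and shows it is $d$-pre-gentle. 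The hypothesis $\mathrm{Ext}^d_{A^d_n}(S,P)=0$ guarantees that $T'=P\oplus\tau_d^{-1}(S)$ is a direct summand of the tilting object of Proposition \ref{ctgent}, so $\mathrm{End}_{\mathcal{O}_{A^d_n}}(T')^{\mathrm{op}}=e\,\mathrm{End}_{\mathcal{O}_{A^d_n}}(T)^{\mathrm{op}}e$ for an idempotent $e$, and $d$-gentleness is inherited by idempotent truncations by Definition \ref{genttle}. To repair your argument you would need to replace the appeal to Corollary \ref{boo} with a direct analysis of the relations of the cluster-category endomorphism algebra, which is precisely what Proposition \ref{ctgent} supplies.
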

If $T$ is tilting as an $A^d_n$-module, then the algebra $\mathrm{End}_{\mathcal{O}_{A^d_n}}(T)^\mathrm{op}$ is a cluster-tilted algebra in the sense of Oppermann-Thomas. Unfortunately it is not always true that cluster-tilted algebras (in the sense of Oppermann-Thomas) of higher Auslander algebras of type $A$ are higher gentle, and we provide a counterexample in Section \ref{sec3}. 
\section{Background}
Consider a finite-dimensional algebra $A$ over a field $k$, and fix a positive integer $d$. We will assume that $A$ is of the form $kQ/I$, where $kQ$ is the path algebra over some quiver $Q$ and $I$ is an admissible ideal of $kQ$. For two arrows in $Q$ $\alpha:i\rightarrow j$ and $\beta:j\rightarrow k$, we denote their composition as $\beta\alpha:i\rightarrow k$. Let $A^\mathrm{op}$ denote the opposite algebra of $A$.
An $A$-module will mean a finitely-generated left $A$-module; by $\mathrm{mod}(A)$ we denote the category of $A$-modules. The functor $D=\mathrm{Hom}_k(-,k)$ defines a duality; let $\Omega$ be the syzygy functor and set $\tau_d=\tau\circ \Omega^{d-1}$ to be the $d$-Auslander-Reiten translation \cite[Section 1.4]{iy1}.  For an $A$-module $M$, let $\mathrm{add}(M)$ be the full subcategory of $\mathrm{mod}(A)$ composed of all $A$-modules isomorphic to direct summands of finite direct sums of copies of $M$. 
A subcategory $\mathcal{C}$ of $\mathrm{mod}(A)$ is \emph{precovering} if for any $M\in \mathrm{mod}(A)$ there is an object $C_M\in\mathcal{C}$ and a morphism $f:C_M\rightarrow M$ such that for any morphism $X\rightarrow M$ with $X\in \mathcal{C}$ factors through $f$; that there is a commutative diagram:
$$\begin{tikzcd} 
\ &X\arrow{d}\arrow[dotted]{dl}[above]{\exists}\\
C_M\arrow{r}{f}&M
\end{tikzcd}$$
The object $C_M$ is said to be the \emph{right $\mathcal{C}$-approximation of $M$}. 
The dual notion of precovering is \emph{preenveloping}.  A subcategory $\mathcal{C}$ that is both precovering and preenveloping is called \emph{functorially finite}. For a finite-dimensional algebra $A$, a functorially-finite subcategory $\mathcal{C}$ of $\mathrm{mod}(A)$ is a \emph{$d$-cluster-tilting subcategory} \cite[Definition 2.2]{iy1} \cite[Definition 3.14]{jasso} if it satisfies the following conditions :
\begin{align*}
\mathcal{C}&=\{M\in\mathrm{mod}(A)|\mathrm{Ext}^i_A(\mathcal{C},M)=0 \ \forall\ 0<i<d\}.\\
\mathcal{C}&=\{M\in\mathrm{mod}(A)|\mathrm{Ext}^i_A(M,\mathcal{C})=0 \ \forall\ 0<i<d\}.
\end{align*}
If there exists a $d$-cluster-tilting subcategory $\mathcal{C}\subseteq \mathrm{mod}(A)$ and $\mathrm{gl.dim}(A)\leq d$, then $A$ is \emph{$d$-representation finite} in the sense of \cite{io}.
The \emph{dominant dimension} of $A$, $\mathrm{dom.dim}(A)$, is the number $n$ such that for a minimal injective resolution of $A$:
$$0\rightarrow A \rightarrow I_0\rightarrow \cdots \rightarrow I_{n-1}\rightarrow I_{n}\rightarrow \cdots$$ the modules $I_0,\ldots, I_{n-1}$ are projective-injective and $I_{n}$ is not projective. The class of $d$-representation-finite algebras were characterised by Iyama as follows.
\begin{theorem}\cite[Proposition 1.3, Theorem 1.10]{iy2}\label{artheory}
Let $A$ be a finite-dimensional algebra with the property that $\mathrm{gl.dim}(A)\leq d$. Then there is a unique $d$-cluster-tilting subcategory $\mathcal{C}\subseteq\mathrm{mod}(A)$ if and only if $$\mathrm{dom.dim}(\mathrm{End}_A(M)^\mathrm{op})\geq d+1 \geq\mathrm{gl.dim}( \mathrm{End}_A(M)^\mathrm{op})$$ where $M$ is an additive generator of the subcategory 
$$\mathcal{C}=\mathrm{add}(\{\tau_d^i(DA)|i>0\})\subseteq\mathrm{mod}(A).$$
\end{theorem}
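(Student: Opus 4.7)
The plan is to prove the two implications separately, using the interplay between the $d$-Auslander-Reiten translation $\tau_d$ acting on $\mathrm{mod}(A)$ and homological data of the endomorphism algebra $\Gamma:=\mathrm{End}_A(M)^\mathrm{op}$.

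For the direction in which uniqueness of $\mathcal{C}$ is assumed, I would first observe that the two defining Ext-vanishing conditions, tested with $A$ and with $DA$, force $\mathcal{C}$ to contain both $A$ and $DA$, so any additive generator is automatically a generator-cogenerator. The key technical input is then to show that $\tau_d$ restricts to a bijection from the non-projective indecomposables in $\mathcal{C}$ to the non-injective indecomposables in $\mathcal{C}$. This uses the higher Auslander-Reiten formula together with the hypothesis $\mathrm{gl.dim}(A)\leq d$, which makes $\tau_d=\tau\circ \Omega^{d-1}$ well-defined on indecomposable non-projectives without losing homological information. Iterating $\tau_d^{-1}$ starting from $DA$ then produces the equality $\mathcal{C}=\mathrm{add}(\{\tau_d^i(DA)\mid i\geq 0\})$, and the existence of an additive generator $M$ is equivalent to the finiteness of this $\tau_d$-orbit.

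Given such $M$, I would read off the bound $\mathrm{gl.dim}(\Gamma)\leq d+1$ from projective resolutions of simple $\Gamma$-modules: since $\mathrm{add}(M)$ is $d$-cluster-tilting, every $A$-module admits a length-$d$ resolution by objects of $\mathrm{add}(M)$, and under the functor $\mathrm{Hom}_A(M,-)$ this produces a $\Gamma$-projective resolution of length at most $d+1$. The bound $\mathrm{dom.dim}(\Gamma)\geq d+1$ then follows dually from injective coresolutions, exploiting that $\mathcal{C}$ contains $DA$ and that $\mathrm{Hom}_A(M,-)$ restricted to $\mathrm{add}(M)$ lands in projective-injective $\Gamma$-modules. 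For the reverse direction, I would reverse this reading: $\mathrm{dom.dim}(\Gamma)\geq d+1\geq 2$ forces $M$ to be a generator-cogenerator, so $\mathrm{add}(M)$ contains $A$ and $DA$; to verify the two Ext-vanishing conditions of $d$-cluster-tilting, I would translate projective-injective syzygies of $\Gamma$-modules back to $A$-modules via the fully faithful functor $\mathrm{Hom}_A(M,-)$ on $\mathrm{add}(M)$, using the global-dimension bound to control the length of resolutions and the dominant-dimension bound to ensure the intermediate terms lift to objects of $\mathrm{add}(M)$. Uniqueness of the resulting subcategory follows because any $d$-cluster-tilting subcategory must contain $DA$ and be closed under $\tau_d$, hence must exhaust $\mathrm{add}(M)$.

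The main obstacle is assembling this homological dictionary in the reverse direction: showing that the numerical conditions on the minimal injective resolution of $\Gamma$, namely that $I_0,\ldots,I_d$ are projective-injective, encode precisely the symmetric Ext-vanishing $\mathrm{Ext}^i_A(\mathrm{add}(M),\mathrm{add}(M))=0$ for $0<i<d$ in both variables. Writing out this correspondence rigorously, and in particular checking that both defining conditions of a $d$-cluster-tilting subcategory are recovered rather than just one, is where the bulk of the work lies; the forward direction and the uniqueness statement are comparatively mechanical once the $\tau_d$-bijection has been established.
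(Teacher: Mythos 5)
This statement is imported verbatim from Iyama's work \cite{iy2} and the paper supplies no proof of it, so there is no in-paper argument to compare yours against. Measured against the actual proof of the higher Auslander correspondence, your outline does follow the standard strategy: forward direction via resolutions inside $\mathrm{add}(M)$ and injective coresolutions, backward direction via translating the dominant- and global-dimension bounds on $\Gamma=\mathrm{End}_A(M)^\mathrm{op}$ into Ext-vanishing for $M$, and uniqueness via closure of any $d$-cluster-tilting subcategory under $\tau_d$ together with $DA\in\mathcal{C}$.

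As a proof, however, the proposal has genuine gaps, and you have located them without closing them. First, the claim that $\tau_d$ restricts to a bijection between non-projective and non-injective indecomposables of $\mathcal{C}$ is itself a substantial theorem: it requires constructing $d$-almost split sequences $0\to\tau_d X\to C_{d-1}\to\cdots\to C_0\to X\to 0$ with all $C_i\in\mathrm{add}(M)$, and it is precisely these sequences, not a generic ``length-$d$ resolution by objects of $\mathrm{add}(M)$,'' that yield the projective resolutions of the simple $\Gamma$-modules giving $\mathrm{gl.dim}(\Gamma)\leq d+1$. Second, the equality $\mathcal{C}=\mathrm{add}(\{\tau_d^i(DA)\})$ needs an argument that the $\tau_d^{-1}$-orbit of every indecomposable of $\mathcal{C}$ terminates at an injective; this is where $\mathrm{gl.dim}(A)\leq d$ actually enters and it is not automatic. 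Third, and most importantly, the backward implication --- that $\mathrm{dom.dim}(\Gamma)\geq d+1\geq\mathrm{gl.dim}(\Gamma)$ forces the symmetric Ext-vanishing and the maximality condition defining a $d$-cluster-tilting subcategory --- is the heart of the theorem; it rests on the M\"uller-type characterisation that, for a generator-cogenerator $M$, one has $\mathrm{dom.dim}(\mathrm{End}_A(M)^\mathrm{op})\geq k+2$ if and only if $\mathrm{Ext}^i_A(M,M)=0$ for $0<i\leq k$, combined with a maximality argument using the global dimension bound, and you explicitly defer exactly this step. So the proposal is a correct road map for Iyama's argument, but not yet a proof.
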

Following Theorem \ref{artheory}, let $\Gamma$ be a finite-dimensional algebra satisfying $$\mathrm{gl.\ dim.}(\Gamma)\leq d+1 \leq \mathrm{dom.\ dim.} (\Gamma)$$ for some positive integer $d\geq 1$.   Then $\Gamma$ is said to be a \emph{$d$-Auslander algebra}. 
An algebra $A$ is called an \emph{$n$-Iwanaga-Gorenstein algebra} if it satisfies the following axioms:

\begin{enumerate} 
\item $\mathrm{inj.dim}_A(A)\leq n$,
\item $\mathrm{proj.dim}_A(DA)\leq n.$
\end{enumerate}

An $A$-module $M$ is said to be \emph{Gorenstein projective} (also referred to as \emph{maximal Cohen-Macaulay} in the literature, most notably in \cite{buch}) if $\mathrm{Ext}^i_A(M,A)=0$ for all $i>0$. The class of Gorenstein projective modules is denoted $\mathrm{GP}(A)$. Likewise, define a module $M$ to be \emph{Gorenstein injective} if $\mathrm{Ext}^i_A(DA,M)=0$ for all $i>0$, and denote by $\mathrm{GI}(A)$ the class of Gorenstein injective modules. For further information about Gorenstein homological algebra, we refer to \cite{chen}.

Let $D^b(A)$ denote the bounded derived category of $\mathrm{mod}(A)$. A complex of $A$-modules is said to be {\em perfect} if it is isomorphic in $D^b(A)$ to a finite complex of finitely generated projective $A$-modules. This gives a full subcategory of $D^b(A)$, denoted by $D_{\mathrm{perf}}^b(A)$. The \emph{singularity category} $D_\mathrm{sg}(A)$ is defined as the Verdier quotient of $D^b(A)/D^b_{\mathrm{perf}}(A)$ \cite{buch} \cite{orlov}. The following theorem is a classical result of Buchweitz.

\begin{theorem}\cite[Theorem 4.4.1]{buch}\label{buch}
Let $A$ be an $n$-Iwanaga-Gorenstein algebra. Then there is an equivalence of (triangulated) categories:
$$\underline{\mathrm{GP}}(A)\cong D_{\mathrm{sg}}(A).$$
\end{theorem}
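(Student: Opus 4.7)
The plan is to construct a natural triangulated functor $F\colon\underline{\mathrm{GP}}(A)\to D_{\mathrm{sg}}(A)$ and show it is an equivalence. First I would endow $\mathrm{GP}(A)$ with the exact structure inherited from $\mathrm{mod}(A)$; closure of $\mathrm{GP}(A)$ under kernels of surjections makes this well-defined. The Iwanaga-Gorenstein hypothesis then provides enough injectives in $\mathrm{GP}(A)$: for each $M\in\mathrm{GP}(A)$ one constructs a short exact sequence $0\to M\to P\to N\to 0$ with $P$ projective and $N\in\mathrm{GP}(A)$, by splicing a projective resolution of $M$ with the $A$-dual of a projective resolution of $\mathrm{Hom}_A(M,A)$ into a complete projective resolution (which exists thanks to $\mathrm{inj.dim}_A(A)\leq n$). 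Combined with $\mathrm{Ext}^i_A(\mathrm{GP}(A),A)=0$, so that $A$-projectives are also injective in this exact structure, one concludes that $\mathrm{GP}(A)$ is a Frobenius exact category whose projective--injective objects are the projective $A$-modules, and hence $\underline{\mathrm{GP}}(A)$ inherits a canonical triangulated structure.

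Next I would define $F$. The composition $\mathrm{GP}(A)\hookrightarrow\mathrm{mod}(A)\hookrightarrow D^b(A)\to D_{\mathrm{sg}}(A)$ annihilates projective modules (being perfect complexes), so it descends to a functor $F\colon\underline{\mathrm{GP}}(A)\to D_{\mathrm{sg}}(A)$. Each Frobenius conflation $0\to L\to M\to N\to 0$ yields a distinguished triangle in $D^b(A)$, and its image in $D_{\mathrm{sg}}(A)$ matches the triangle produced by the stable cosyzygy suspension, so $F$ is triangulated.

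The main step is showing $F$ is an equivalence. For essential surjectivity, I would invoke the standard consequence of the Iwanaga-Gorenstein axioms that the $n$-th syzygy $\Omega^n X$ of every finitely generated $A$-module $X$ is Gorenstein projective. Applied to each cohomology module of $X\in D^b(A)$ and combined with the fact that bounded complexes of projectives vanish in $D_{\mathrm{sg}}(A)$, a truncation-and-shift argument shows any such $X$ is isomorphic in $D_{\mathrm{sg}}(A)$ to a shift of an object of $\mathrm{GP}(A)$. For full faithfulness, for $M,N\in\mathrm{GP}(A)$ I would compute $\mathrm{Hom}_{D_{\mathrm{sg}}(A)}(FM,FN)$ via Verdier's calculus of fractions: any roof $M\xleftarrow{s}X\xrightarrow{f}N$ can be refined, using the Gorenstein-projective approximations produced above, to a fraction whose apex lies in $\mathrm{GP}(A)$, and two such fractions agree modulo $D^b_{\mathrm{perf}}(A)$ precisely when their difference factors through a projective module, yielding exactly $\underline{\mathrm{Hom}}_A(M,N)$.

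The principal obstacle is the essential surjectivity: it genuinely requires \emph{both} Iwanaga-Gorenstein axioms simultaneously, first to build the complete projective resolutions that characterise $\mathrm{GP}(A)$, and then to bound uniformly the Gorenstein-projective dimension of an arbitrary module so that the reduction from $D^b(A)$ to $\mathrm{GP}(A)$ terminates in finitely many steps. Once the approximation machinery is in place, full faithfulness and the triangulated compatibility are largely formal manipulations.
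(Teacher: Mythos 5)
The paper does not prove this statement: it is quoted as a classical theorem of Buchweitz, \cite[Theorem 4.4.1]{buch}, and used as a black box. Your outline correctly reproduces the standard argument from Buchweitz's manuscript --- the Frobenius exact structure on $\mathrm{GP}(A)$ with projectives as the projective--injective objects, the descent of $\mathrm{GP}(A)\hookrightarrow D^b(A)\to D_{\mathrm{sg}}(A)$ to the stable category, essential surjectivity via the fact that $\Omega^n$ of any module is Gorenstein projective over an $n$-Iwanaga-Gorenstein algebra, and full faithfulness via the calculus of fractions --- so it is consistent with (indeed, essentially is) the proof of the cited source.
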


\subsection{Tilting theory and $(d+2)$-angulated categories}
Let $A$ be a finite-dimensional algebra.
An $A$-module $T$ is a \emph{pre-$d$-tilting module} \cite{ha}, \cite{miya} if:

\begin{enumerate}
\item $\mathrm{proj.dim}(T)\leq d$.
\item $\mathrm{Ext}^i_A(T,T)=0$ for all $0<i\leq d$.
\end{enumerate}
Then $T$ is in addition \emph{$d$-tilting} if there exists an exact sequence $$0\rightarrow A\rightarrow T_0\rightarrow T_1\rightarrow \cdots \rightarrow T_d\rightarrow 0$$ 
where $T_0,\ldots, T_d\in \mathrm{add}(T)$. 
The importance of tilting modules is highlighted by the following theorem:
\begin{theorem}[Happel]\cite{ha}
Let $A$ be a finite-dimensional algebra, $T$ a $d$-tilting $A$-module and $B:=\mathrm{End}_A(T)^\mathrm{op}$. Then the derived functor $\mathbf{R}\mathrm{Hom}_A(T,-)$ induces an equivalence of triangulated categories
$$D^b(A)\rightarrow D^b(\mathrm{End}_A(T)^\mathrm{op}).$$
\end{theorem}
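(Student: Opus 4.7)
The plan is to apply Rickard's derived Morita theorem: an equivalence $D^b(A)\simeq D^b(B)$ is induced by a \emph{tilting complex}, meaning an object $T^\bullet\in K^b(\mathrm{proj}\,A)$ with $\mathrm{Hom}_{K^b(\mathrm{proj}\,A)}(T^\bullet,T^\bullet[i])=0$ for $i\neq 0$ that generates $K^b(\mathrm{proj}\,A)$ as a thick subcategory, in which case the equivalence is implemented by $\mathbf{R}\mathrm{Hom}_A(T^\bullet,-)$ with $B=\mathrm{End}_{D^b(A)}(T^\bullet)^{\mathrm{op}}$. Happel's original argument predates this formulation but is philosophically identical; I would follow the Rickard route as it cleanly separates the conditions to be verified.

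First, I would replace $T$ by a finite projective resolution $T^\bullet\in K^b(\mathrm{proj}\,A)$, which exists since $\mathrm{proj.dim}(T)\leq d$. Then $T^\bullet\simeq T$ in $D^b(A)$, and the endomorphism ring computed in $D^b(A)$ coincides with $\mathrm{End}_A(T)=B^{\mathrm{op}}$. Next I would check orthogonality: $\mathrm{Hom}_{D^b(A)}(T^\bullet,T^\bullet[i])=\mathrm{Ext}^i_A(T,T)=0$ for $i\neq 0$, since negative shifts vanish because $T$ is concentrated in degree $0$, the range $0<i\leq d$ is given by hypothesis, and $i>d$ follows from $\mathrm{proj.dim}(T)\leq d$. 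This establishes the Hom-vanishing half of the tilting condition.

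Then I would verify generation. By iteratively taking cones along the defining coresolution $0\to A\to T_0\to T_1\to\cdots\to T_d\to 0$, one obtains a tower of distinguished triangles exhibiting $A$ as an object of the smallest thick subcategory of $D^b(A)$ containing $\mathrm{add}(T^\bullet)$. Since $A$ generates $K^b(\mathrm{proj}\,A)$ as a thick subcategory, so does $T^\bullet$. Having verified all conditions, Rickard's theorem produces the desired derived equivalence, implemented by $\mathbf{R}\mathrm{Hom}_A(T,-)=\mathbf{R}\mathrm{Hom}_A(T^\bullet,-)$.

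The main obstacle I anticipate is the generation step: one must decompose the coresolution of $A$ into a coherent sequence of triangles whose vertices lie in $\mathrm{add}(T^\bullet)$, which requires lifting each $T_i$ to its projective resolution and tracking the compatibility of the resulting bicomplex. In Happel's direct approach this is encoded instead in the construction of a t-structure on $D^b(A)$ whose heart is equivalent to $\mathrm{mod}(B)$, and one must then prove that this heart generates $D^b(A)$ as a triangulated category; in either guise, this is the nontrivial homological input.
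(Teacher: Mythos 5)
The paper does not prove this statement; it is quoted as background directly from Happel's work \cite{ha}, so there is no internal proof to compare against. Your argument is a correct and standard route to the result: verifying that a finite projective resolution $T^\bullet$ of $T$ is a tilting complex (self-orthogonality from $\mathrm{Ext}^i_A(T,T)=0$ for $0<i\leq d$ together with $\mathrm{proj.dim}(T)\leq d$, generation from the coresolution of $A$) and invoking Rickard's Morita theory. This differs from Happel's original proof, which predates Rickard and proceeds by showing directly that $\mathbf{R}\mathrm{Hom}_A(T,-)$ is fully faithful on a generating class and dense, using the vanishing of $\mathrm{Ext}$ in the relevant range; your route buys a cleaner separation of the conditions to check, at the cost of needing the supplementary fact that the Rickard equivalence can be realized by the standard functor $\mathbf{R}\mathrm{Hom}_A(T,-)$ (for a tilting module this is classical, with quasi-inverse $-\otimes^{\mathbf{L}}_B T$, but strictly speaking Rickard's existence theorem alone does not hand you the specific functor). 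One remark on your anticipated ``main obstacle'': the generation step is less delicate than you fear. Thick subcategories live in $D^b(A)$, where each $T_i$ is already isomorphic to its projective resolution, so the d\'evissage of $0\to A\to T_0\to\cdots\to T_d\to 0$ into triangles can be carried out directly with the modules $T_i$; no bicomplex compatibility needs to be tracked. With that simplification the sketch is complete and correct.
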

The concept of a $(d+2)$-angulated category was introduced by Geiss-Keller-Oppermann in \cite{gko}. We refer there, as well as to \cite{bt}, for a definition.  
\begin{theorem}\cite[Theorem 1]{gko}\label{ko} Let $\Lambda$ be a $d$-representation-finite algebra with $d$-cluster-tilting subcategory $\mathcal{C}\subseteq \mathrm{mod}(\Lambda)$. Then there exists a $(d+2)$-angulated category $\mathcal{U}_\Lambda$ with $d$-suspension functor $\Sigma^d$ and inverse $d$-suspension functor $\Sigma^{-d}$. Any $d$-exact sequence in $\mathcal{C}$ $$0\rightarrow M_{d+1}\rightarrow \cdots \rightarrow M_1 \rightarrow M_0\rightarrow 0$$ 
induces a $(d+2)$-angle in $\mathcal{U}_\Lambda$
$$ M_{d+1}\rightarrow \cdots \rightarrow M_1 \rightarrow M_0\rightarrow \Sigma^d (M_{d+1}).$$ 
\end{theorem}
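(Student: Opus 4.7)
The plan is to realise $\mathcal{U}_\Lambda$ as a $d$-cluster-tilting subcategory of a suitable triangulated orbit category, and then transport the $(d+2)$-angulated structure from the ambient triangle structure. Concretely, I would form $\mathcal{T}_\Lambda := D^b(\Lambda)/\nu_d$, where $\nu_d := \nu \circ [-d]$ and $\nu$ is the derived Nakayama (Serre) functor of $D^b(\Lambda)$. The first technical hurdle is to show $\mathcal{T}_\Lambda$ is triangulated: Keller's theorem on orbit categories applies because $\Lambda$ has finite global dimension and $\nu_d$ is an autoequivalence whose powers have appropriate boundedness properties on the canonical tilting object, so $\mathcal{T}_\Lambda$ inherits a triangulated structure with shift $\Sigma$ descended from $[1]$.

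Next, I would define $\mathcal{U}_\Lambda$ as the additive closure in $\mathcal{T}_\Lambda$ of the image of $\mathcal{C}$. Using the characterisation $\mathcal{C} = \mathrm{add}\{\tau_d^{-i}(\Lambda) \mid i \geq 0\}$ from Theorem \ref{artheory}, together with the relation $\tau_d = \nu_d$ holding on $\mathcal{C}$, every object of $\mathcal{T}_\Lambda$ has a representative in $\mathcal{U}_\Lambda$, and the $\tau_d$-orbit picture collapses to give a canonical additive generator. The $d$-suspension is then declared to be $\Sigma^d = [d]$ restricted to $\mathcal{U}_\Lambda$; closure of $\mathcal{U}_\Lambda$ under $\Sigma^d$ and its inverse follows from the vanishing $\mathrm{Ext}^i_\Lambda(\mathcal{C},\mathcal{C}) = 0$ for $0 < i < d$ together with the identification of $\mathcal{C}$-shifts under $\nu_d$, which forces objects of $\mathcal{U}_\Lambda[j]$ for $0 < j < d$ to sit in a single ``slice'' disjoint from $\mathcal{U}_\Lambda$.

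The distinguished $(d+2)$-angles are then constructed from $d$-exact sequences using Yoneda. Given
\begin{equation*}
0 \to M_{d+1} \to M_d \to \cdots \to M_0 \to 0
\end{equation*}
in $\mathcal{C}$, its class in $\mathrm{Ext}^d_\Lambda(M_0, M_{d+1})$ corresponds to a morphism $M_0 \to \Sigma^d M_{d+1}$ in $D^b(\Lambda)$, and hence in $\mathcal{U}_\Lambda$. I would take as distinguished $(d+2)$-angles all sequences isomorphic to one arising this way, and verify the axioms of \cite{gko}: the identity axiom and rotation axiom are straightforward once one notes that rotating a $d$-exact sequence by applying $\Sigma^d$ to the left end yields a genuine triangle in $\mathcal{T}_\Lambda$ with the remaining terms in $\mathcal{U}_\Lambda$. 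Completion of morphisms to $(d+2)$-angles uses the existence of $\mathcal{C}$-approximations: given $f: M_0 \to N_0$, successive right $\mathcal{C}$-approximations of the iterated cones (truncated suitably) produce the required intermediate terms in $\mathcal{C}$.

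The main obstacle is the higher analogue of the octahedral axiom (the mapping cone axiom of \cite{gko}), which demands a commutative $(d+2) \times (d+2)$ diagram relating two composable $(d+2)$-angles and their composite. The strategy here is inductive: use the octahedral axiom in $\mathcal{T}_\Lambda$ to produce ordinary triangles, then systematically refine each triangle via right $\mathcal{C}$-approximations so that every object lands in $\mathcal{U}_\Lambda$; the crucial point, where the proof can fail without the $d$-cluster-tilting hypothesis, is that the connecting maps produced this way must commute with the data from the original $(d+2)$-angles. Verifying commutativity requires careful diagram chasing using the vanishing of $\mathrm{Ext}^i_\Lambda(\mathcal{C},\mathcal{C})$ for $0<i<d$ to kill the obstruction cocycles, and this is where the bulk of the technical work lies.
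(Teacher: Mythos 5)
The paper offers no proof of Theorem \ref{ko}; it is quoted from \cite{gko} (in the form applied to $d$-representation-finite algebras, as in \cite{ot}), so your attempt has to be measured against the argument given there.

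The foundation of your construction is wrong, and the error is structural rather than cosmetic. You set $\mathcal{T}_\Lambda := D^b(\Lambda)/\nu_d$ and define $\mathcal{U}_\Lambda$ as the additive closure of the image of $\mathcal{C}$. Two things fail. First, Keller's theorem on triangulated orbit categories does not apply: it requires $\Lambda$ to be derived equivalent to a hereditary category (plus finiteness conditions on the orbits), and the algebras relevant here, e.g.\ $A^d_n$ for $d\geq 2$, are not piecewise hereditary. The non-availability of a triangulated structure on such orbit categories is precisely the reason the $(d+2)$-angulated formalism exists, so assuming $\mathcal{T}_\Lambda$ is triangulated begs the question. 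Second, even granting a triangulated $\mathcal{T}_\Lambda$, passing to $\nu_d$-orbits collapses $\mathcal{C}=\mathrm{add}\{\tau_d^{-i}(\Lambda)\mid i\geq 0\}$ onto the image of $\mathrm{add}(\Lambda)$, since $\tau_d$ and $\nu_d$ agree on $\mathcal{C}$ up to projective summands; your $\mathcal{U}_\Lambda$ then has far too few indecomposables, and the induced functor $\mathbb{S}_d$ becomes isomorphic to the identity, which is incompatible with the paper's subsequent definition $\mathcal{O}_\Lambda=\mathcal{U}_\Lambda/\Sigma^{-d}(\mathbb{S}_d)$. Relatedly, your claim that every object of $\mathcal{T}_\Lambda$ has a representative in $\mathcal{U}_\Lambda$ is false: for $d\geq 2$ there are modules outside $\mathcal{C}$ whose images do not land in any shift of it.

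The correct ambient category is $D^b(\Lambda)$ itself, with no quotient: one sets $\mathcal{U}_\Lambda:=\mathrm{add}\{\nu_d^i(\Lambda)\mid i\in\mathbb{Z}\}\subseteq D^b(\Lambda)$, which by Iyama's theorem is a $d$-cluster-tilting subcategory of $D^b(\Lambda)$, and which is closed under $[d]=\nu\circ\nu_d^{-1}$ because $\nu(\Lambda)=D\Lambda$ already lies in $\mathcal{C}\subseteq\mathcal{U}_\Lambda$; this is where $d$-representation-finiteness enters. Your attempted derivation of $[d]$-closure from $\mathrm{Ext}^i_\Lambda(\mathcal{C},\mathcal{C})=0$ for $0<i<d$ does not work, since that vanishing says nothing about degree $d$. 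With the ambient category corrected, the second half of your outline --- distinguished $(d+2)$-angles as $\mathcal{U}_\Lambda$-resolutions of cones, the Yoneda description of the connecting map $M_0\rightarrow\Sigma^d(M_{d+1})$, completion of morphisms via right approximations, and the higher octahedral axiom deduced from the octahedral axiom of the ambient triangulated category together with the vanishing of $\mathrm{Hom}(\mathcal{U}_\Lambda,\mathcal{U}_\Lambda[i])$ for $0<i<d$ --- is indeed the skeleton of the proof in \cite{gko}, but as written it is built on the wrong category.
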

The functor $\tau_d$ acts on $\mathcal{C}$, and induces a functor $\mathbb{S}_d$ in $\mathcal{U}_\Lambda$. Oppermann-Thomas \cite[Definition 5.22]{ot} defined the \emph{$(d+2)$-angulated cluster category of $\Lambda$} to be the orbit category $$\mathcal{O}_\Lambda:=\mathcal{U}_\Lambda/\Sigma^{-d}(\mathbb{S}_d).$$

\begin{theorem}\cite[Theorem 5.2(1)]{ot}
Let $\Lambda$ be a $d$-representation-finite algebra with $d$-cluster-tilting subcategory $\mathcal{C}\subseteq\mathrm{mod}(\Lambda)$. Then $\mathcal{O}_\Lambda$ is a $(d+2)$-angulated category with $d$-suspension $[d]$. The isomorphism classes of indecomposable objects of $\mathcal{O}_\Lambda$ are in bijection with the indecomposable direct summands of $M\oplus \Lambda[d]$, where $M$ is an additive generator of $\mathcal{C}$.
\end{theorem}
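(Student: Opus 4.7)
The plan is to realise $\mathcal{O}_\Lambda$ as an orbit category of the $(d+2)$-angulated category $\mathcal{U}_\Lambda$ provided by Theorem \ref{ko}, and then to invoke an analogue of Keller's theorem on triangulated orbit categories in the $(d+2)$-angulated setting. The functor $\mathbb{S}_d$ is a $d$-Serre functor on $\mathcal{U}_\Lambda$ induced by $\tau_d$, hence in particular a $d$-triangle autoequivalence; so $F:=\Sigma^{-d}\mathbb{S}_d$ is an autoequivalence that commutes (up to natural isomorphism) with $\Sigma^d$. This is the key property that lets $\Sigma^d$ descend to a well-defined $d$-suspension $[d]$ on the orbit category $\mathcal{O}_\Lambda = \mathcal{U}_\Lambda/F$.

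To transfer the $(d+2)$-angulated structure, one verifies the usual freeness and local finiteness hypotheses: for indecomposables $X,Y$ of $\mathcal{U}_\Lambda$, the set $\{n:F^n X\cong Y\}$ is finite, and $\mathrm{Hom}_{\mathcal{U}_\Lambda}(X,F^n Y)=0$ for $|n|\gg 0$. Using the description of indecomposables of $\mathcal{U}_\Lambda$ as $d$-fold shifts $\Sigma^{jd}(C)$ with $C$ indecomposable in $\mathcal{C}$, and using that $\Lambda$ is $d$-representation-finite (so $\mathcal{C}$ has only finitely many indecomposables and $\tau_d$ acts on it with finite non-trivial orbits), these conditions follow from standard Auslander--Reiten-theoretic arguments. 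A higher analogue of Keller's construction then produces the $(d+2)$-angulated structure on $\mathcal{O}_\Lambda$: the $(d+2)$-angles are precisely the images under the orbit projection of $(d+2)$-angles in $\mathcal{U}_\Lambda$.

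For the classification of indecomposables, exploit that $\mathbb{S}_d$ restricts on $\mathcal{C}$ to $\tau_d$ composed with the $d$-shift, so that $F$ acts on indecomposables of $\mathcal{C}$ essentially as $\tau_d$. Every $F$-orbit of an indecomposable $\Sigma^{jd}(C)$ with $C$ non-projective therefore contains a unique representative among the indecomposable summands of $M$. The orbits passing through $\Sigma^{jd}(P)$ for $P$ projective are not captured in this way, since $\tau_d$ kills projectives; such orbits are parametrised precisely by the indecomposable summands of $\Lambda[d]$. Taken together, this yields the desired bijection.

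The main obstacle is the $(d+2)$-angulated orbit category construction itself: while the triangulated ($d=1$) case is Keller's original theorem, the higher case requires lifting entire $(d+2)$-angles along the orbit projection and reverifying each axiom of Geiss--Keller--Oppermann, with particular care taken for the higher octahedral axiom. A secondary subtlety is the bookkeeping around projectives, since $\tau_d$ is defined only on non-projective modules and the $\Lambda[d]$ summand must be attached by hand to parametrise the remaining orbits.
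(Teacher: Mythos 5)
This statement is quoted verbatim from Oppermann--Thomas \cite[Theorem 5.2(1)]{ot}; the paper offers no proof of its own, so the only meaningful comparison is with the argument in \cite{ot}. Measured against that, your proposal has a genuine gap at its central step. You reduce everything to ``an analogue of Keller's theorem on triangulated orbit categories in the $(d+2)$-angulated setting,'' verify the standard finiteness and vanishing hypotheses, and then assert that such an analogue ``produces the $(d+2)$-angulated structure on $\mathcal{O}_\Lambda$,'' with the $(d+2)$-angles being the images of $(d+2)$-angles under the projection $\mathcal{U}_\Lambda\to\mathcal{O}_\Lambda$. No such general theorem is available, and it is exactly the hard part of the statement: already for $d=1$ the orbit category of a triangulated category is not triangulated in general, Keller's theorem requires a dg enhancement and a passage to the triangulated hull, and the identification of the hull with the orbit category is a nontrivial verification rather than a formality. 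You correctly flag the higher octahedral axiom as ``the main obstacle,'' but naming the obstacle does not discharge it, and descending $(d+2)$-angles along the quotient functor is not a viable strategy for constructing the angulation from scratch.

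The actual route in \cite{ot} avoids this entirely: one forms the \emph{triangulated} orbit category $D^b(\Lambda)/\nu_d$, where Keller's genuine theorem applies because $\Lambda$ is $d$-representation finite, shows that the image of $\mathcal{U}_\Lambda$ is a $d$-cluster-tilting subcategory closed under $[d]$ inside this triangulated category, and then invokes \cite[Theorem 1]{gko} to endow that subcategory with a $(d+2)$-angulation. In other words, the $(d+2)$-angulated structure is inherited from an ambient triangulated orbit category, not constructed by a hypothetical higher orbit-category theorem. Your description of the fundamental domain for the indecomposables is in the right spirit, though slightly garbled: the orbit of an indecomposable projective $P$ \emph{is} captured by $M$ (since $P\in\mathcal{C}$); the summands of $\Lambda[d]$ parametrise the additional orbits through the shifted projectives $P[d]$, which do not meet $\mathcal{C}$, rather than the orbits through the $\Sigma^{jd}(P)$ themselves.
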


The concepts of rigid and cluster-tilting objects can be extended to the language of $(d+2)$-angulated categories.
\begin{defn}\cite[Definition 5.3]{ot}
Let $\mathcal{O}_\Lambda$ be the $(d+2)$-angulated cluster category of $\Lambda$ with $d$-suspension $[d]$. An object $T\in \mathcal{O}_\Lambda$ is \emph{$d$-rigid} if $$\mathrm{Hom}_{\mathcal{O}_\Lambda}(T,T[d])=0.$$ A $d$-rigid object $T\in \mathcal{O}_\Lambda$ is a \emph{Oppermann-Thomas cluster-tilting object} if any $X\in \mathcal{O}_\Lambda$ occurs in a $(d+2)$-angle
$$X[-d]\rightarrow T_d\rightarrow\cdots\rightarrow T_1\rightarrow T_0\rightarrow X$$ with $T_i\in\mathrm{add}(T)$ for all $0\leq i\leq d$. The endomorphism algebra 
$\mathrm{End}_{\mathcal{O}_\Lambda}(T)^\mathrm{op}$ is called a \emph{Oppermann-Thomas cluster-tilted algebra}.
\end{defn}

Tilting and Oppermann-Thomas cluster-tilting objects are related as follows.
\begin{theorem}\cite[Theorem 5.5, Theorem 5.37]{ot}\label{trivext}
Let $\Lambda$ be a $d$-representation-finite algebra with canonical $d$-cluster-tilting subcategory $\mathcal{C}\subseteq\mathrm{mod}(\Lambda)$. Let $T$ be a $d$-tilting $\Lambda$-module so that $\mathrm{add}(T)\subseteq \mathcal{C}$. Then
\begin{enumerate}
\item $T$ is an Oppermann-Thomas cluster-tilting object in $\mathcal{O}_\Lambda$.
\item There is a $d$-cluster-tilting subcategory $\mathcal{D}\subseteq \mathrm{mod}(\mathrm{End}_{\mathcal{O}_\Lambda}(T)^\mathrm{op})$.
\end{enumerate}
\end{theorem}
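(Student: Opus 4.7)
The plan is to prove the two statements separately, using Happel's derived equivalence induced by $T$ and the orbit description of $\mathcal{O}_\Lambda = \mathcal{U}_\Lambda / \Sigma^{-d}(\mathbb{S}_d)$.

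For part (1), I would start by verifying that $T$ is $d$-rigid in $\mathcal{O}_\Lambda$. The orbit construction gives
$$\mathrm{Hom}_{\mathcal{O}_\Lambda}(T, T[d]) \;=\; \bigoplus_{i\in\mathbb{Z}} \mathrm{Hom}_{\mathcal{U}_\Lambda}\bigl(T, (\Sigma^{-d}\mathbb{S}_d)^i \Sigma^d T\bigr).$$
The $i = 0$ term equals $\mathrm{Hom}_{\mathcal{U}_\Lambda}(T, \Sigma^d T)$, which under Theorem~\ref{ko} matches $\mathrm{Ext}^d_\Lambda(T, T)$; this vanishes by the pre-$d$-tilting assumption. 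For $i \neq 0$ each summand involves powers of $\mathbb{S}_d = \tau_d$, and I would argue vanishing via $d$-Auslander--Reiten duality together with the projective dimension bound $\mathrm{proj.dim}(T) \leq d$, which forces the relevant Ext-degrees to sit outside the range where $\mathcal{C}$ detects non-zero classes.

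For the cluster-tilting property, given $X \in \mathcal{O}_\Lambda$ I would first lift $X$ to an object of $\mathcal{C}$ (or $\Lambda[d]$), which is legitimate since indecomposables of $\mathcal{O}_\Lambda$ correspond to indecomposable summands of $M \oplus \Lambda[d]$. The $d$-tilting resolution $0 \to \Lambda \to T_0 \to \cdots \to T_d \to 0$, applied functorially, yields an $\mathrm{add}(T)$-resolution of $X$ of length at most $d$ whose terms lie in $\mathcal{C}$ because $\mathrm{add}(T) \subseteq \mathcal{C}$. This is a $d$-exact sequence in $\mathcal{C}$ and hence, by Theorem~\ref{ko}, a $(d+2)$-angle in $\mathcal{U}_\Lambda$, descending to the required $(d+2)$-angle in $\mathcal{O}_\Lambda$.

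For part (2), I would first apply Happel's theorem to the $d$-tilting module $T$ to obtain a triangle equivalence $D^b(\Lambda) \simeq D^b(B)$ with $B := \mathrm{End}_\Lambda(T)^{\mathrm{op}}$, which transports $\mathcal{C}$ to a $d$-cluster-tilting subcategory $\mathcal{C}' \subseteq \mathrm{mod}(B)$. The next structural step is to identify $\mathrm{End}_{\mathcal{O}_\Lambda}(T)^{\mathrm{op}}$ with a trivial extension $B \ltimes N$ for an appropriate $\tau_d$-twisted $B$-bimodule $N$ — the label \emph{trivext} signals this as the intended route — coming from the extra orbit summand $\mathrm{Hom}_{\mathcal{U}_\Lambda}(T, \Sigma^{-d}\mathbb{S}_d T)$. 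The desired subcategory $\mathcal{D}$ is then the pullback of $\mathcal{C}'$ along the canonical surjection $B \ltimes N \twoheadrightarrow B$, and one checks functorial finiteness and both Ext-vanishing conditions directly.

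The main obstacle will be the orbit computation in part (1): verifying that \emph{every} non-identity orbit representative contributes zero to $\mathrm{Hom}_{\mathcal{O}_\Lambda}(T, T[d])$ requires careful bookkeeping of $\mathbb{S}_d$, the bounded $\tau_d$-behaviour in the $d$-representation-finite setting, and the projective dimension bound. A secondary difficulty lies in the trivial-extension identification in (2); once that structural description is secured, however, the transfer of the $d$-cluster-tilting structure from $\mathrm{mod}(B)$ to $\mathrm{mod}(\mathrm{End}_{\mathcal{O}_\Lambda}(T)^{\mathrm{op}})$ is largely formal.
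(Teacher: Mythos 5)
This statement is quoted verbatim from Oppermann--Thomas (their Theorems 5.5 and 5.37); the paper offers no proof of its own, so there is nothing internal to compare your argument against. Judged on its own terms, your proposal has two genuine gaps. First, in part (1) the step ``the $d$-tilting resolution $0\to\Lambda\to T_0\to\cdots\to T_d\to 0$, applied functorially, yields an $\mathrm{add}(T)$-resolution of $X$'' is not a valid move: a coresolution of $\Lambda$ does not transport to a coresolution of an arbitrary $X\in\mathcal{C}$ by any functor (tensoring or Hom-ing destroys both exactness and membership in $\mathrm{add}(T)$). To produce the required $(d+2)$-angle you would have to build iterated right $\mathrm{add}(T)$-approximations of $X$ inside $\mathcal{C}$ and prove the process terminates after $d$ steps with kernels staying in $\mathcal{C}$; that is the real content, and it is absent. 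The vanishing of the $i\neq 0$ orbit summands in the rigidity computation is likewise only gestured at, and you correctly flag it as the hard point without resolving it.

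Second, in part (2) the proposed subcategory $\mathcal{D}$ --- the pullback of $\mathcal{C}'\subseteq\mathrm{mod}(B)$ along the surjection $B\ltimes N\twoheadrightarrow B$ --- cannot be correct. A $d$-cluster-tilting subcategory of $\mathrm{mod}(B\ltimes N)$ must contain all projective $B\ltimes N$-modules, but every module obtained by restriction along that surjection is annihilated by $N$, and the indecomposable projectives $(B\ltimes N)e$ are not annihilated by $N$ unless $N=0$. So your $\mathcal{D}$ misses the projectives entirely. The construction Oppermann--Thomas actually use is the higher analogue of Buan--Marsh--Reiten: $\mathcal{D}$ is the essential image of the functor $\mathrm{Hom}_{\mathcal{O}_\Lambda}(T,-)$ on the $(d+2)$-angulated cluster category, not a restriction of scalars from the tilted algebra $B$. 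Your identification of the cluster-tilted algebra as a trivial extension of $B$ is correct and is indeed in Oppermann--Thomas, but it feeds into their argument differently than you suggest.
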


\section{Higher gentle algebras}
An algebra $A=kQ/I$ is \emph{special biserial} if 
\begin{enumerate}
\item Each vertex of $Q$ has at most two arrows starting from it.
\item Each vertex of $Q$ has at most two arrows ending at it.
\item For each arrow $\alpha\in Q_1$, there is most one arrow $\beta$ such that $\alpha\beta\notin I$.
\item For each arrow $\gamma\in Q_1$, there is most one arrow $\beta$ such that $\beta\gamma\notin I$.
\end{enumerate}
If moreover $A$ satisfies 
\begin{enumerate}
\item The ideal $I$ is generated by paths of length at most two.
\item For each arrow $\alpha\in Q_1$, there is most one arrow $\beta$ such that $\alpha\beta\in I$.
\item For each arrow $\gamma\in Q_1$, there is most one arrow $\beta$ such that $\beta\gamma\in I$.
\end{enumerate}
then $A$ is said to be \emph{gentle}. 
In other words, on either side of each arrow in $Q$ there is at most one arrow such that the composition with this arrow is in $I$, and at most one such that the composition is not in $I$. 
 We are interested in generalising special-biserial algebras in the following fashion, the difference between this generalisation and that of special-multiserial algebras is that we allow a plentiful amount of commutativity relations, but fewer zero relations.

An algebra $A=kQ/I$ \emph{contains an $m$-cube} if there is a collection of paths between vertices $x$ and $y$ in the quiver of $A$ such that the underlying graph is an $m$-dimensional cube and any two paths defining a square face in this $m$-cube commute in $A$. For each arrow $\beta\in Q_1$, then an arrow $\alpha\in Q_1$ is a \emph{strong successor of $\beta$ in $A$} if $s(\alpha)=t(\beta)$, neither $\alpha\beta\in I$ nor are there are arrows $\alpha^\prime$ and $\beta^\prime$ and a relation $\alpha\beta-\alpha^\prime\beta^\prime\in I$. In this case $\beta$ is also a \emph{strong predecessor of $\alpha$ in $A$}.
Consider an algebra $A=kQ/I$ satisfying the following conditions, which we consider to some extent as a replacement the special-biserial conditions.
\begin{enumerate}
\item[(A1)] Each vertex of $Q$ has at most $d$ arrows starting from it. \label{fu}
\item[(A1')] Each vertex of $Q$ has at most $d$ arrows ending at it.
\item[(A2)]  For each arrow $\alpha\in Q_1$ there is at most one strong successor $\beta \in Q_1$ of $\alpha$ in $A$.
\item[(A2')]  For each arrow $\beta\in Q_1$ there is at most one strong predecessor $\alpha\in Q_1$ of $\beta$ in $A$.
\item[(A3)]  Let $\alpha\in Q_1$ be an arrow with strong successor $\beta$. Then for any $1< m<d$ and any set of $m$ arrows $\beta_i$ indexed by $1\leq i\leq m$ such that $s(\beta_i)=t(\alpha)$ and $\beta_i\alpha\notin I$ for all $1\leq i\leq m$, there is a unique $(m+1)$-cube containing $\beta$ and all $\beta_i$.\label{sb6}
\item[(A3')]  Let $\alpha\in Q_1$ be an arrow with strong predecessor $\beta$. Then for any $1< m<d$ and any set of $m$ arrows $\beta_i$ indexed by $1\leq i\leq m$ such that $s(\alpha)=t(\beta_i)$ and $\alpha\beta_i\notin I$ for all $1\leq i\leq m$, there is a unique $(m+1)$-cube containing $\beta$ and all $\beta_i$.  \label{this}
\item[(A4)] The ideal $I$ is generated by paths and commutativity relations of length two.
\end{enumerate}

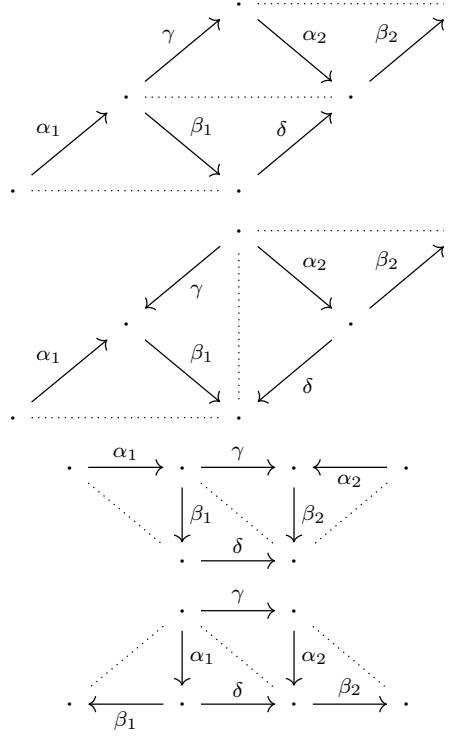
\begin{figure}
\caption{The four configurations whereby two zero relations sandwich a commutativity relation}\label{fig1}
\begin{tikzcd}
\ &\ &\cdot\arrow{dr}{\alpha_2}\arrow[-,dotted]{rr}&&\cdot\\
\ &\cdot\arrow{ur}{\gamma}\arrow{dr}{\beta_1}\arrow[-,dotted]{rr}&&\cdot\arrow{ur}{\beta_2}\\
\cdot\arrow{ur}{\alpha_1}\arrow[-,dotted]{rr}&&\cdot\arrow{ur}{\delta} 
\end{tikzcd}

\begin{tikzcd}
\ &\ &\cdot\arrow{dr}{\alpha_2}\arrow{dl}{\gamma}\arrow[-,dotted]{rr}&&\cdot\\
\ &\cdot\arrow{dr}{\beta_1}&&\cdot\arrow{ur}{\beta_2}\arrow{dl}{\delta} \\
\cdot\arrow{ur}{\alpha_1}\arrow[-,dotted]{rr}&&\cdot\arrow[-,dotted]{uu}
\end{tikzcd}

\begin{tikzcd}
\cdot\arrow{r}{\alpha_1}\arrow[-,dotted]{dr}&\cdot \arrow[-,dotted]{dr}\arrow{d}{\beta_1}\arrow{r}{\gamma}&\cdot\arrow{d}{\beta_2}&\cdot\arrow{l}{\alpha_2}\arrow[-,dotted]{dl} \\
\ &\cdot\arrow{r}{\delta}&\cdot
\end{tikzcd}

\begin{tikzcd}
\ &\cdot\arrow{d}{\alpha_1}\arrow[-,dotted]{dl}\arrow[-,dotted]{dr}\arrow{r}{\gamma}&\cdot\arrow{d}{\alpha_2}\arrow[-,dotted]{dr} \\
\cdot &\cdot\arrow{r}{\delta}\arrow{l}{\beta_1}& \cdot \arrow{r}{\beta_2}& \cdot
\end{tikzcd}
\end{figure}

We say that two zero relations $\beta_i\alpha_i\in I$, $i\in\{1,2\}$, \emph{sandwich a commutativity relation} if  there exist arrows $\gamma,\delta\in Q_1$ satisfying any of the diagrams in Figure \ref{fig1}.

\begin{defn}\label{genttle}
An algebra $A=kQ/I$ satisfying axioms (A1)-(A4) is \emph{$(d-1)$-pre-gentle} if it satisfies the following additional axioms:

\begin{enumerate}[(E1)]
\item For each arrow $\alpha\in Q_1$, there is at most one arrow $\beta$ such that $\alpha\beta\in I$.\label{E1}
\item For each arrow $\gamma\in Q_1$, there is at most one arrow $\beta$ such that $\beta\gamma \in I$.\label{E2}
\item There exists no commutativity relation that is sandwiched by zero relations.\label{E5}
\item For every idempotent $e$ of $A$, then $eAe$ satisfies axioms (E\ref{E1})-(E\ref{E5}).\label{E6}
\end{enumerate}
Finally, an algebra $B$ is \emph{$(d-1)$-gentle} if:
\begin{enumerate}
\item There is a $(d-1)$-pre-gentle algebra $A$ and an idempotent $e$ of $A$ such that $B\cong eAe$. 
\item For every idempotent $f$ of $B$, then the quiver of $fBf$ contains no $d$-cube.
\end{enumerate}
\end{defn}

A $1$-gentle algebra is simply a gentle algebra.
The axiom forbidding any sandwiching by zero relations should be thought of as a replacement for having no zero relations of length greater than two. For any gentle algebra $A$ and any idempotent $e$ of $A$, then also $eAe$ is gentle. Once we introduce commutativity relations, we need some other means by which to control the length of zero relations - this is achieved by forbidding the sandwiching of a commutativity relation by zero relations. Nevertheless, as we shall see in Example \ref{bigeg}, this is the condition that would make the most sense to relax.
 
For studying $d$-gentle algebras, it is helpful to start with the notion of a localisable object. Localisable objects were introduced in \cite{ck}: the name stems from the localising subcategories studied by Geigle and Lenzing in \cite[Section 2]{gl}. 

\begin{defn} Let $A$ be a finite-dimensional algebra. Then an object $S\in\mathrm{mod}(A)$ is a \emph{localisable} if:
\begin{itemize}
\item the module $S$ is simple,
\item  $\mathrm{proj.dim}_A(S)\leq 1$, and 
\item  $\mathrm{Ext}^1_A(S,S)=0$. 
\end{itemize}
\end{defn}

Every localisable object $S$ can be expressed as $S\cong A/\langle f\rangle$ for some idempotent $f$ in $A$, since $\mathrm{Ext}^1_A(S,S)=0$. This was generalised in the following sense in \cite{mc2}:

\begin{defn}\label{fabdefa}
	Let $A$ be a finite-dimensional algebra and $e, f$ idempotents of $A$. Then $f$ is an \emph{fabric idempotent of $A$ with respect to $e$} (or simply $f$ is a \emph{fabric idempotent}) if:
	\begin{itemize}
		\item the idempotent $f$ satisfies $\mathrm{proj.dim}_A(A/\langle f \rangle) \leq 1$.
		\item For every projective $A/\langle f \rangle$-module $P$, the module $\tau_A(P)$ is injective as an \\$A/\langle e \rangle$-module.
\item For every injective $A/\langle e \rangle$-module $I$, the module $\tau_A^{-1}(I)$ is projective as an \\$A/\langle f \rangle$-module.			\end{itemize}
\end{defn}

\begin{theorem}\cite[Theorem 2.1]{chen4}  \cite[Corollary 3.3]{chen3}\cite[Theorem 5.2]{pss}\label{chensing}
Let $A$ be a finite-dimensional algebra and $f$ an idempotent of $A$. Then there is an equivalence $$D_{\mathrm{sg}}(A)\cong D_{\mathrm{sg}}(fAf)$$ 
if and only if the algebra $A$ satisfies $\mathrm{proj.dim}_{fAf}(fA)<\infty$ and $\mathrm{proj.dim}_A(M)<\infty$ for all modules $M\in \mathrm{mod}(A/\langle f \rangle)$. 
\end{theorem}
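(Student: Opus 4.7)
The plan is to exhibit mutually inverse functors between $D_{\mathrm{sg}}(A)$ and $D_{\mathrm{sg}}(fAf)$ built from the bimodules $fA$ and $Af$. Set $F = fA\otimes_A -\colon D^b(A)\to D^b(fAf)$, which is exact and agrees with $M\mapsto fM$, and $G = Af\otimes^{\mathbb{L}}_{fAf}-\colon D^b(fAf)\to D^b(A)$. One has a natural isomorphism $FG\cong \mathrm{id}$ because $f(Af\otimes_{fAf} N)\cong fAf\otimes_{fAf} N\cong N$. In the opposite direction, for any $M\in\mathrm{mod}(A)$ the counit $\varepsilon_M\colon Af\otimes^{\mathbb{L}}_{fAf} fM\to M$ has a cone $C_M$ satisfying $fC_M=0$, so every cohomology module of $C_M$ lies in $\mathrm{mod}(A/\langle f\rangle)$.

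The first step is to check that $F$ and $G$ descend to the singularity quotients. The functor $G$ automatically sends perfect complexes to perfect complexes, since $Af$ is a summand of $A$ as a left $A$-module. For $F$ one uses the finiteness of $\mathrm{proj.dim}_{fAf}(fA)$: every projective $A$-module is a summand of some $A^n$, hence its image under $F$ is a summand of $(fA)^n$, which by hypothesis is perfect over $fAf$. Thus $F$ preserves $D^b_{\mathrm{perf}}$, and both functors descend to $\bar F\colon D_{\mathrm{sg}}(A)\to D_{\mathrm{sg}}(fAf)$ and $\bar G\colon D_{\mathrm{sg}}(fAf)\to D_{\mathrm{sg}}(A)$, still satisfying $\bar F\bar G\cong \mathrm{id}$.

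The principal technical obstacle is to show $\bar G\bar F\cong\mathrm{id}$; this is where the second finiteness assumption is used. One must verify that the cone $C_M$ of the counit is perfect in $D^b(A)$. Since $C_M$ is bounded and each of its cohomology modules lies in $\mathrm{mod}(A/\langle f\rangle)$ and hence, by hypothesis, has finite projective dimension over $A$, a standard induction on the length of the cohomology via the truncation triangles $\tau^{\leq n-1}C_M\to \tau^{\leq n}C_M\to H^n(C_M)[-n]$ shows $C_M\in D^b_{\mathrm{perf}}(A)$. Consequently $\varepsilon_M$ becomes an isomorphism in $D_{\mathrm{sg}}(A)$, and together with the previous step this establishes the equivalence.

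For the converse direction, suppose $\bar F$ is an equivalence. Since $A$ vanishes in $D_{\mathrm{sg}}(A)$, its image $\bar F(A)=fA$ vanishes in $D_{\mathrm{sg}}(fAf)$, which exactly says $\mathrm{proj.dim}_{fAf}(fA)<\infty$. For any $M\in\mathrm{mod}(A/\langle f\rangle)$ we have $\bar F(M)=fM=0$, and because $\bar F$ is an equivalence this forces $M$ to vanish in $D_{\mathrm{sg}}(A)$, i.e. to be perfect in $D^b(A)$, which is to say $\mathrm{proj.dim}_A(M)<\infty$. The key subtlety throughout is the coherent handling of the derived tensor $Af\otimes^{\mathbb{L}}_{fAf}-$ on bounded complexes; this is precisely what the finiteness of $\mathrm{proj.dim}_{fAf}(fA)$ ensures.
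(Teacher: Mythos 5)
The paper does not prove this statement---it is imported verbatim from the cited references---so there is no in-text argument to compare against; your proposal has to stand on its own, and it has a genuine gap at its central step.

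The gap is the assertion that the cone $C_M$ of the counit $\varepsilon_M\colon Af\otimes^{\mathbb{L}}_{fAf} fM\to M$ is bounded, and, equivalently, that $G=Af\otimes^{\mathbb{L}}_{fAf}-$ is a well-defined functor $D^b(fAf)\to D^b(fAf)$'s target $D^b(A)$ at all. Computing $G(N)$ requires a projective resolution of $N$ over $fAf$, which is in general infinite, so a priori $G$ only lands in $D^-(A)$; the negative cohomologies of $C_M$ are the groups $\mathrm{Tor}_i^{fAf}(Af,fM)$, and nothing in the hypotheses forces these to vanish for $i\gg 0$. Note the sidedness: $\mathrm{proj.dim}_{fAf}(fA)<\infty$ concerns the \emph{left} $fAf$-module $fA$ (it is exactly what makes $F$ preserve perfect complexes, and what makes the \emph{right} adjoint $\mathbf{R}\mathrm{Hom}_{fAf}(fA,-)$ preserve $D^b$), whereas the boundedness of $Af\otimes^{\mathbb{L}}_{fAf}N$ is governed by the flat dimension of the \emph{right} $fAf$-module $Af$, which is not assumed finite. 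Your closing remark that the finiteness of $\mathrm{proj.dim}_{fAf}(fA)$ ``is precisely what ensures'' the good behaviour of the derived tensor is therefore not correct. Knowing only that each cohomology module of $C_M$ is perfect over $A$ does not rescue the argument: your truncation induction requires finitely many cohomologies, and an unbounded-below complex with perfect cohomologies need not be perfect. This is the genuine technical heart of the theorem; the published proofs avoid it by not constructing a quasi-inverse on all of $D^b(fAf)$, instead establishing essential surjectivity from the (underived) density $N\cong f(Af\otimes_{fAf}N)$ and proving fullness and faithfulness of $\bar F$ directly (via the colimit description of morphisms in $D_{\mathrm{sg}}$, or via a recollement of unbounded derived categories that is then shown to restrict). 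The remaining parts of your argument---$FG\cong\mathrm{id}$, preservation of perfect complexes by $F$ under condition (i), the truncation induction once boundedness is granted, and the converse direction---are fine.
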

Fabric idempotents are useful because of the following corollary.
\begin{cor}\cite[Corollary 3.7]{mc2}\label{fabthm}
Let $A$ be a finite-dimensional algebra with fabric idempotent $f$. If in addition $\mathrm{gl.dim}(A/\langle f\rangle)<\infty$, then there is an equivalence $$D_{\mathrm{sg}}(A)\cong D_{\mathrm{sg}}(fAf).$$  
\end{cor}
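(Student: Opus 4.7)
The plan is to apply Theorem \ref{chensing}, which reduces the claim to verifying two finiteness conditions: first, that $\mathrm{proj.dim}_A(M)<\infty$ for every $M\in\mathrm{mod}(A/\langle f\rangle)$, and second, that $\mathrm{proj.dim}_{fAf}(fA)<\infty$. Both should follow from the definition of a fabric idempotent combined with the hypothesis $\mathrm{gl.dim}(A/\langle f\rangle)<\infty$.

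The first condition is the easier of the two. Given $M\in\mathrm{mod}(A/\langle f\rangle)$, the finite global dimension hypothesis produces a finite projective resolution
$$0\to P_n\to\cdots\to P_0\to M\to 0$$
over $A/\langle f\rangle$. Each $P_i$ is a summand of a free $A/\langle f\rangle$-module, and by the first clause of Definition \ref{fabdefa} we have $\mathrm{proj.dim}_A(A/\langle f\rangle)\leq 1$; splicing $A$-projective resolutions of each $P_i$ with the above resolution gives $\mathrm{proj.dim}_A(M)\leq n+1$.

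The main obstacle is the second condition, and here the Auslander-Reiten clauses of Definition \ref{fabdefa} must do the real work. The strategy I would pursue is to study $fA$ as a left $fAf$-module via the Schur-type functor $\mathrm{Hom}_A(Af,-)$, whose derived behaviour is governed by $\tau_A$ through the Auslander-Reiten formula. The two $\tau_A$-clauses set up mutually inverse correspondences between projective $A/\langle f\rangle$-modules and injective $A/\langle e\rangle$-modules, and the plan is to convert this correspondence into termination of a minimal $fAf$-projective resolution of $fA$: at each step the syzygy should be pushed into a subfactor annihilated by $e$ whose projective cover over $fAf$ is again accessible from the fabric data, and after finitely many steps one exhausts the finite set of indecomposable summands in the relevant module category. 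The delicate part is turning this qualitative picture into a clean inductive bound.

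Once both conditions have been established, Theorem \ref{chensing} delivers the equivalence $D_{\mathrm{sg}}(A)\cong D_{\mathrm{sg}}(fAf)$ immediately, completing the proof of the corollary.
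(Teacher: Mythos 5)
The paper itself gives no proof of this corollary---it is imported wholesale from \cite[Corollary 3.7]{mc2}---but the intended route is plainly the one you take: verify the two hypotheses of Theorem \ref{chensing}. Your handling of the first hypothesis is correct and complete: every projective $A/\langle f\rangle$-module has projective dimension at most $1$ over $A$ by the first bullet of Definition \ref{fabdefa}, so splicing along a finite $A/\langle f\rangle$-projective resolution bounds $\mathrm{proj.dim}_A(M)$ by $\mathrm{gl.dim}(A/\langle f\rangle)+1$.

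The genuine gap is the second hypothesis, $\mathrm{proj.dim}_{fAf}(fA)<\infty$, which you flag as the ``main obstacle'' but never actually establish: no induction is set up, and the closing remark that one ``exhausts the finite set of indecomposable summands'' would not by itself force a minimal projective resolution to terminate, since syzygies can recur periodically. The strategy is also aimed at the wrong part of Definition \ref{fabdefa}: the Auslander--Reiten clauses are not needed here. The first bullet alone suffices. Indeed, $\mathrm{proj.dim}_A(A/\langle f\rangle)\leq 1$ means, via the exact sequence $0\to AfA\to A\to A/\langle f\rangle\to 0$, that $AfA$ is projective as a left $A$-module; since $AfA=Af\cdot(AfA)$, a projective module with this property lies in $\mathrm{add}(Af)$, and applying the functor $\mathrm{Hom}_A(Af,-)=f(-)$ yields $fA=f(AfA)\in\mathrm{add}(fAf)$, i.e.\ $fA$ is already \emph{projective} as a left $fAf$-module. (This is the classical observation of Dlab--Ringel and Auslander--Platzeck--Todorov on idempotent ideals that are projective modules.) With both hypotheses verified, Theorem \ref{chensing} gives the equivalence. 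So your reduction is the right one, but the half you defer is exactly the half that needs an argument---and it admits a short, direct one rather than the AR-theoretic induction you sketch.
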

By design, we obtain the following result.
\begin{theorem}\label{elso}
Let $A$ be a $d$-gentle algebra. Then there exists an idempotent $f$ such that $f$ is a product of fabric idempotents, $fAf$ is a gentle algebra and there is an equivalence of categories $$D_{\mathrm{sg}}(A)\cong D_{\mathrm{sg}}(fAf).$$
\end{theorem}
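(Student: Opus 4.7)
The plan is to iteratively apply Corollary \ref{fabthm}, contracting a fabric idempotent at each step so as to reduce a $d$-gentle algebra to a $(d-1)$-gentle one, until we reach the gentle (i.e.\ $1$-gentle) case. Since each application of Corollary \ref{fabthm} preserves the singularity category, composing $d-1$ such reductions will yield the desired idempotent $f$ and equivalence.

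First I would locate candidate fabric idempotents inside a $d$-gentle algebra $A$. Consider a vertex $v$ sitting at the ``top corner'' of a maximal $m$-cube of commutative squares, in the sense that the arrows out of $v$ all lie in that cube and $v$ admits no further strong successor outside it. Setting $f_v = 1 - \mathbf{1}_v$, axioms (A3)--(A3') together with (E\ref{E5}) imply that the simple $S_v$ has projective dimension at most one, since its syzygy is read off from the opposite face of the cube, while (E\ref{E1})--(E\ref{E2}) give $\mathrm{Ext}^1_A(S_v,S_v)=0$. Taking $f$ to be the product of $f_v$ over a maximal independent family of such top-corner vertices produces an idempotent with $\mathrm{proj.dim}_A(A/\langle f\rangle)\leq 1$, and the same cube combinatorics supply a matching bottom-corner idempotent $e$ for which the $\tau_A$-conditions of Definition \ref{fabdefa} hold, so $f$ is a fabric idempotent with respect to $e$. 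Since $A/\langle f\rangle$ is semisimple by the construction (the removed vertices become isolated in the quiver of $A/\langle f\rangle$), the global-dimension hypothesis of Corollary \ref{fabthm} is trivially satisfied.

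Second, I would verify that $fAf$ is again higher-gentle, but of one smaller cube-dimension. The quiver of $fAf$ is obtained by deleting the contracted vertices and composing the incoming and outgoing arrows through each; by (A3)--(A3'), each maximal $m$-cube of $A$ that passed through a contracted vertex becomes an $(m-1)$-cube of $fAf$. The pre-gentle axioms (E\ref{E1})--(E\ref{E6}) are stable under this contraction, since any sandwiched commutativity or zero relation appearing in $fAf$ would lift to a sandwiched configuration in $A$, contradicting (E\ref{E5}); the remaining axioms (A1)--(A4) pass visibly to idempotent subalgebras. The hypothesis that no idempotent subalgebra of $A$ contains a $(d+1)$-cube forces the analogous no-$d$-cube condition on idempotent subalgebras of $fAf$, so $fAf$ is $(d-1)$-gentle. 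Iterating $d-1$ times and letting $f$ denote the product of the fabric idempotents chosen at each stage yields a gentle algebra $fAf$ with $D_{\mathrm{sg}}(A)\cong D_{\mathrm{sg}}(fAf)$.

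The main obstacle is the second and third bullets of Definition \ref{fabdefa}: verifying that $\tau_A$ carries projectives of $A/\langle f\rangle$ to injectives of $A/\langle e\rangle$, and vice versa. This is where the full strength of axioms (A3)--(A3') enters, as they ensure that the minimal projective resolution of each top-corner simple is an $m$-dimensional cube whose terminal face sits precisely in the bottom-corner set defining $e$. Pinning down this bijection between cube tops and cube bottoms, and checking that none of the four forbidden sandwiching configurations of Figure \ref{fig1} can arise at any stage of the iteration, is the most delicate bookkeeping in the argument.
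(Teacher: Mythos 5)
There is a genuine gap at the very first step. You claim that for a vertex $v$ at the ``top corner'' of a maximal $m$-cube, the simple $S_v$ has projective dimension at most one because ``its syzygy is read off from the opposite face of the cube.'' This is false for $m\geq 2$: if $v$ is the source of an $m$-cube, then $\Omega(S_v)=\mathrm{rad}(P_v)$ is generated by the $m$ arrows out of $v$, and the commutativity relations on the square faces glue these generators together, so $\mathrm{rad}(P_v)$ is \emph{not} a direct sum of projectives. The minimal projective resolution of such a simple is a Koszul-type complex running over the whole cube and has length $m$, so $\mathrm{proj.dim}_A(S_v)=m\geq 2$ and the first bullet of Definition \ref{fabdefa} already fails for your candidate $f$. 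Relatedly, your assertion that $A/\langle f\rangle$ is semisimple is unjustified, and you explicitly defer the verification of the two $\tau_A$-conditions of Definition \ref{fabdefa} to ``delicate bookkeeping,'' which is precisely the content that a proof must supply.

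The paper's argument is structured quite differently and avoids these problems. Rather than contracting cube corners to descend from $d$-gentle to $(d-1)$-gentle, it targets one commutativity relation (one commutative square on vertices $a,b,c,d$) at a time. The no-sandwiching axiom (E\ref{E5}) forces the existence of inclusions of projectives $P_d\hookrightarrow P_c\hookrightarrow P_a$ (otherwise one of the forbidden configurations of Figure \ref{fig1} appears); one then sets $M:=\mathrm{coker}(P_d\rightarrow P_c)$ and takes $f$ to be the smallest idempotent with $M\in\mathrm{proj}(A/\langle f\rangle)$, which is a fabric idempotent by \cite[Proposition 3.2]{mc2} --- this citation is what discharges the $\tau_A$-conditions you left open. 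Finiteness of $\mathrm{gl.dim}(A/\langle f\rangle)$ comes from acyclicity (no cycle through the support of $M$), not semisimplicity, and the iteration terminates not at a chain of lower-gentle algebras but at an algebra with no commutativity relations and only length-two zero relations, i.e.\ a gentle algebra; a separate argument is then needed for $eAe$ with $e$ a proper idempotent of a $d$-pre-gentle algebra, where longer zero relations can appear. To repair your proposal you would need to replace the top-corner simples by the cokernel construction (or otherwise exhibit modules that genuinely satisfy all three bullets of Definition \ref{fabdefa}) and rework the termination argument accordingly.
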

\begin{proof}
We first prove the result for $d$-pre-gentle algebras. The proof is similar to the case for higher Nakayama algebras \cite[Theorem 4.4]{mc2}.
Suppose there are four vertices $a,b,c,d\in Q_0$ and non-zero paths $w_1,w_2,w_3,w_4$ in $Q$ as follows:
	$$\begin{tikzcd}
a\arrow{r}{w_1}\arrow[dotted, dash]{dr}\arrow{d}{w_3}&b\arrow{d}{w_2}\\
c \arrow{r}{w_4}&d
\end{tikzcd}$$
Suppose that there does not exist a surjective morphism $I_b\twoheadrightarrow I_a$. Then there is a vertex $x$ and a path $w_x:x\rightarrow a$ such that $w_1w_x\in I$ as in Figure \ref{picture}.

\begin{figure}[h]\caption{The case of a commutative square with no surjection $I_b\twoheadrightarrow I_a$.}\label{picture}
\begin{tikzcd}
x\arrow{d}{w_x} \arrow[dotted, dash]{dr}\\
a\arrow{r}{w_1}\arrow{d}{w_3}\arrow[dotted, dash]{dr}&b\arrow{d}{w_2} \\
c \arrow{r}{w_4}&d
\end{tikzcd}
\end{figure}
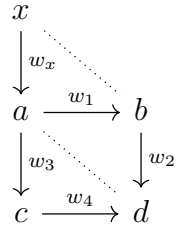
There must be an inclusion $P_c\hookrightarrow P_a$, otherwise suppose there is a vertex $g$ and an path $w_g:c\rightarrow g$ such that $w_gw_3\in I$:
$$\begin{tikzcd}
\ &x\arrow{d}{w_x} \arrow[dotted, dash]{dr}\\
\ &a\arrow{r}{w_1}\arrow[dotted,-]{dl}\arrow{d}{w_3}\arrow[dotted, dash]{dr}&b\arrow{d}{w_2}\\
g&c\arrow{l}{w_g} \arrow{r}{w_4}&d 
\end{tikzcd}$$
Then we obtain the following forbidden diagram:
$$\begin{tikzcd}
\ &x\arrow{dd}{w_3w_x} \arrow[dotted, dash]{ddr}\arrow[dotted, dash]{ddl}\\
\\
g&c\arrow{l}{w_g} \arrow{r}{w_4}&d 
\end{tikzcd}$$
Secondly, there must be an inclusion $P_d\hookrightarrow P_c$, otherwise there is a vertex $h$ and a path $w_h:d\rightarrow h$ such that $w_hw_4\in I$. This implies a sandwiching by zero relations:

$$\begin{tikzcd}
x\arrow{d}{w_x} \arrow[dotted, dash]{dr}\\
a\arrow{r}{w_1}\arrow{d}{w_3}\arrow[dotted, dash]{dr}&b\arrow{d}{w_2} \\
c \arrow{r}{w_4}\arrow[dotted, dash]{dr}&d\arrow{d}{w_h}\\
\ &h
\end{tikzcd}$$

So there must be injective morphisms $P_d\hookrightarrow P_c$ and $P_c\hookrightarrow P_a$. Define the module $M:=\mathrm{coker}(P_d\rightarrow P_c)$, and choose the smallest idempotent $f$ of $A$ such that $M\in\mathrm{proj}(A/\langle f\rangle)$. Then $f$ is a fabric idempotent by \cite[Proposition 3.2]{mc2}. Since there is an inclusion $P_c\hookrightarrow P_a$, there cannot exist a cycle containing $c$, and similarly nor any other vertex supported by $M$. This implies that $\mathrm{gl.dim}(A/\langle f\rangle)\leq \infty$.  Hence Corollary \ref{fabthm} implies that $D_{\mathrm{sg}}(A)\cong D_\mathrm{sg}(fAf)$.

A similar argument can be made if there does not exist a surjective morphism $I_b\twoheadrightarrow I_a$, and a dual argument may be made if there exists neither an injective morphism $P_d\hookrightarrow P_c$ nor an injective morphism $P_c\hookrightarrow P_a$.

Since zero relations are of length two, we may remove all commutativity relations for $A$ as above, and this will not result in any zero relations of length greater than three (compare with Figure \ref{picture} without vertex $c$). Hence we are reducing to a $d$-gentle algebra whose singularity category is equivalent to that of $A$, and that has no commutativity relations and no zero relations of length greater than two. In other words a gentle algebra.

Now suppose we have an arbitrary $d$-gentle algebra $eAe$ (where $A$ is a $d$-pre-gentle algebra), and a zero relation of length greater than two. It should be easily seen that any such relation may only occur if there were a series of commutativity relations in $A$ neighbouring a relation of length two, one case of which is presented in Figure \ref{picture}. Consider the case of Figure \ref{picture}, and assume $e=e_x+e_a+e_c+e_d$. So we see that our construction is precisely compatible for $d$-gentle algebras - we obtain fabric idempotent $f=e_x+e_a+e_d$ of $eAe$, which is now a relation of length two. So by applying this reduction technique as above then we again arrive at a gentle algebra.
\end{proof}

Singularity categories for gentle algebras were described in \cite{ka}, hence we may also describe the singularity category of any $d$-gentle algebra. Singularity categories are especially interesting for Iwanaga-Gorenstein algebras, because of Theorem \ref{buch}.  The following result generalises the main theorem of \cite{gr}.

\begin{cor}
Any $d$-gentle algebra is Iwanaga-Gorenstein.
\end{cor}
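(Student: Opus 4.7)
My plan is to reduce to the gentle case via Theorem \ref{elso}, invoke the Geiss-Reiten theorem \cite{gr} there, and then propagate the Iwanaga-Gorenstein property backwards along the chain of fabric idempotents.

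Concretely, Theorem \ref{elso} supplies an idempotent $f = f_1 f_2 \cdots f_r$, with each $f_i$ a fabric idempotent at the relevant stage of the reduction, such that $fAf$ is a gentle algebra. By \cite{gr} the algebra $fAf$ is then Iwanaga-Gorenstein. It suffices to establish the following inductive step: if $A'$ admits a fabric idempotent $g$ with $\mathrm{gl.dim}(A'/\langle g\rangle) < \infty$ and $gA'g$ is Iwanaga-Gorenstein, then $A'$ is Iwanaga-Gorenstein. The global-dimension hypothesis is exactly the condition verified in the proof of Theorem \ref{elso} at each stage, so iterating the inductive step along the chain $f = f_1 \cdots f_r$ recovers the conclusion for $A$.

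To prove the inductive step, I would check that $\mathrm{inj.dim}_{A'}(A') < \infty$; the finiteness of $\mathrm{proj.dim}_{A'}(DA')$ follows by the symmetric argument applied to $A'^{\mathrm{op}}$, since Definition \ref{fabdefa} admits an obvious dualisation. It suffices to verify $\mathrm{Ext}^{\gg 0}_{A'}(S, A') = 0$ for every simple $A'$-module $S$, and the simples split into two classes. If $gS = 0$, then $S$ is an $A'/\langle g\rangle$-simple; combining $\mathrm{proj.dim}_{A'}(A'/\langle g\rangle) \leq 1$ (from Definition \ref{fabdefa}) with $\mathrm{gl.dim}(A'/\langle g\rangle) < \infty$ gives $\mathrm{proj.dim}_{A'}(S) < \infty$ and hence the required Ext vanishing. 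If $gS \neq 0$, then $gS$ is a simple $gA'g$-module; the Iwanaga-Gorenstein property of $gA'g$, together with $\mathrm{proj.dim}_{gA'g}(gA') < \infty$, allows the relevant Ext-vanishing to be transported back from $gA'g$ to $A'$ via the standard recollement machinery of \cite{pss}. Note that $\mathrm{proj.dim}_{gA'g}(gA') < \infty$ is available to us because the hypotheses of Corollary \ref{fabthm} are in force, so the corresponding half of Theorem \ref{chensing} applies.

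The main obstacle is the case $gS \neq 0$: one has to track how projective resolutions over $A'$ behave under the functor $g \cdot (-) \colon \mathrm{mod}(A') \to \mathrm{mod}(gA'g)$, and convert finite injective dimension of $gA'g$ over itself into Ext-vanishing over $A'$. The finiteness $\mathrm{proj.dim}_{gA'g}(gA') < \infty$ is precisely what prevents the resolutions from blowing up under restriction and makes the propagation go through; once this is in place the rest of the induction is a routine dimension count.
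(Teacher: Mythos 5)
Your overall strategy (reduce to $fAf$ gentle via Theorem \ref{elso}, apply \cite{gr} there, and pull the Gorenstein property back to $A$) is reasonable in outline, and your treatment of the simples killed by $g$ is fine: $\mathrm{proj.dim}_{A'}(A'/\langle g\rangle)\leq 1$ together with $\mathrm{gl.dim}(A'/\langle g\rangle)<\infty$ does give $\mathrm{proj.dim}_{A'}(S)<\infty$ and hence the Ext-vanishing. The genuine gap is exactly where you flag ``the main obstacle'': the case $gS\neq 0$ is not covered by ``standard recollement machinery'', and the finiteness of $\mathrm{proj.dim}_{gA'g}(gA')$ does not by itself transport the vanishing. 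Two concrete problems. First, Gorensteinness is not a formal consequence of the singular equivalence of Theorem \ref{chensing}; a triangle equivalence $D_{\mathrm{sg}}(A')\cong D_{\mathrm{sg}}(gA'g)$ is not known (and is in general false) to preserve the Iwanaga-Gorenstein property, so you cannot lean on the equivalence itself. Second, the recollement adjunction computes $\mathrm{Ext}^i_{A'}(A'g\otimes^{\mathbf{L}}_{gA'g}gS,\,A')\cong\mathrm{Ext}^i_{gA'g}(gS,\,gA')$: the left-hand argument is not $S$ (the difference is supported on $A'/\langle g\rangle$ but may be unbounded unless you control $\mathrm{Tor}^{gA'g}_{*}(A'g,gS)$), and the right-hand target is $gA'$, not $gA'g$, so Gorensteinness of $gA'g$ alone does not give the vanishing; you would need finiteness statements for $A'g$ and $gA'(1-g)$ over $gA'g$ that you have not established for fabric idempotents. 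As written, the inductive step is asserted rather than proved.

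The paper avoids all of this with a short direct argument in the opposite direction: supposing some injective $A$-module $I$ has infinite projective dimension, apply the exact functor $\mathrm{Hom}_A(Af,-)$ to its projective resolution; since $f$ is a product of fabric idempotents, neither $I$ nor any syzygy $\Omega^i(I)$ lies in $\mathrm{mod}(A/\langle f\rangle)$, so one obtains an infinite projective resolution of the injective $fAf$-module $fI$, contradicting the fact that $fAf$ is gentle and hence Iwanaga-Gorenstein by \cite{gr}. If you want to salvage your induction, you would need to prove a transfer result of the form ``Gorensteinness ascends along a fabric idempotent'', which is essentially as hard as the corollary itself; the direct contradiction via the functor $f(-)$ is the shorter path.
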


\begin{proof}
Given a $d$-gentle algebra $A$, consider an infinite projective resolution (as a complex) of some injective $A$-module $I$ $$\cdots\rightarrow P_n\rightarrow \cdots \rightarrow P_1\rightarrow P_0\rightarrow 0.$$
By Theorem \ref{elso}, there is a product of fabric idempotents $f$, such that $fAf$ is gentle. Since $f$ is a product of fabric idempotents, neither $I$ nor $\Omega^i(I)$ are in $\mathrm{mod}(A/\langle f\rangle)$ for any $i\geq 1$. 
Applying the functor $\mathrm{Hom}_A(Af,-)$ induces the exact sequence of projective $fAf$-modules $$\cdots\rightarrow fP_n\rightarrow \cdots \rightarrow fP_1\rightarrow fP_0\rightarrow 0.$$ This is an infinite projective resolution of the injective $fAf$-module $fI$, and so $fI$ has infinite projective dimension. Since $fAf$ is gentle and hence Iwanaga-Gorenstein, this is a contradiction.
\end{proof}

\section{Tilted algebras of higher Auslander algebras of type $\vec{A}$}
Following the notation of Oppermann-Thomas \cite[Definition 2.2]{ot}, define the sets $$\mathbf{I}_{m}^d:=\{(i_0,\ldots,i_d)\in \{1,\ldots,m\}^{d+1}|\forall x\in \{0,1,\ldots,d-1\}:i_x+2\leq i_{x+1}\},$$
$$^\circlearrowleft\mathbf{I}_{m}^d:=\{(i_0,\ldots,i_d)\in \mathbf{I}_{m}^d|i_d+2\leq i_0+m\}.$$

Given two increasing $(d+1)$-tuples of real numbers $X=\{x_0,x_1,\ldots,x_d\}$ and $Y=\{y_0,y_1,\ldots,y_d\}$ , then \emph{$X$ intertwines $Y$} if $x_0<y_0<x_1<y_1<\cdots <x_d<y_d$. Denote by $X\wr Y$ if $X$ intertwines $Y$. A collection of increasing $(d+1)$-tuples of real numbers is \emph{non-intertwining} if no pair of elements intertwine (in either order). 

In \cite{iy2}, Iyama describes an inductive construction of an $(i+1)$-Auslander algebra from an $i$-Auslander algebra. In particular, for the path algebra of type $\vec{A}_n$, a $d$-Auslander algebra may be constructed, we call this algebra $A^d_n$. As part of this construction, the category $\mathrm{mod}(A^{d}_n)$ has a canonical $d$-cluster-tilting subcategory, and it is unique by Theorem \ref{artheory}. By \cite[Theorem 3.4]{ot}, the indecomposable modules in this $d$-cluster-tilting subcategory, as well as the vertices of the quiver of $A^{d+1}_n$ may be labelled by $\mathbf{I}_{n+2d}^{d}$.
The quiver of $A^2_4$ is as follows:

$$\begin{tikzpicture}[xscale=5,yscale=2.5]
\node(xa) at (-2,1){$13$};
\node(xb) at (-1.6,1){$24$};
\node(xc) at (-1.2,1){$35$};
\node(xd) at (-0.8,1){$46$};

\node(xab) at (-1.8,1.2){$14$};
\node(xac) at (-1.4,1.2){$25$};
\node(xad) at (-1,1.2){$36$};

\node(xbb) at (-1.6,1.4){$15$};
\node(xbc) at (-1.2,1.4){$26$};

\node(xcc) at (-1.4,1.6){$16$};

\draw[->](xa) edge(xab);
\draw[->](xb) edge(xac);
\draw[->](xc) edge(xad);

\draw[->](xbb) edge(xac);
\draw[->](xbc) edge(xad);

\draw[->](xbb) edge(xcc);

\draw[->](xab) edge(xb);
\draw[->](xac) edge(xc);
\draw[->](xad) edge(xd);

\draw[->](xab) edge(xbb);
\draw[->](xac) edge(xbc);

\draw[->](xcc) edge(xbc);
\end{tikzpicture}$$

The quiver of $A^3_4$ is as follows:

$$\begin{tikzpicture}[xscale=8,yscale=1.5]

\node(a) at (-2,0){$135$};
\node(b) at (-1.6,0){$146$};
\node(c) at (-1.2,0){$157$};
\node(d) at (-0.8,0){$168$};
\node(ab) at (-1.8,0.2){$136$};
\node(ac) at (-1.4,0.2){$147$};
\node(ad) at (-1,0.2){$158$};
\node(bb) at (-1.6,0.4){$137$};
\node(bc) at (-1.2,0.4){$148$};
\node(cc) at (-1.4,0.6){$138$};
\draw[->](a) edge (ab);
\draw[->](b) edge (ac);
\draw[->](c) edge (ad);
\draw[->](bb) edge (ac);
\draw[->](bc) edge (ad);
\draw[->](bb) edge (cc);
\draw[->](ab) edge (b);
\draw[->](ac) edge (c);
\draw[->](ad) edge (d);
\draw[->](ab) edge (bb);
\draw[->](ac) edge (bc);
\draw[->](cc) edge (bc);
\node(xa) at (-1.8,1){$246$};
\node(xb) at (-1.4,1){$257$};
\node(xc) at (-1,1){$268$};
\node(xab) at (-1.6,1.2){$247$};
\node(xac) at (-1.2,1.2){$258$};
\node(xbb) at (-1.4,1.4){$248$};
\draw[->](xa) edge(xab);
\draw[->](xb) edge(xac);
\draw[->](xbb) edge(xac);
\draw[->](xab) edge(xb);
\draw[->](xac) edge(xc);
\draw[->](xab) edge(xbb);
\node(xxa) at (-1.6,1.8){$357$};
\node(xxb) at (-1.2,1.8){$368$};
\node(xxab) at (-1.4,2){$358$};
\draw[->](xxa) edge(xxab);
\draw[->](xxab) edge(xxb);
\node(xxxa) at (-1.4,2.6){$468$};
\draw[->](b) edge (xa);
\draw[->](c) edge (xb);
\draw[->](d) edge (xc);
\draw[->](ac) edge (xab);
\draw[->](ad) edge (xac);
\draw[->](bc) edge (xbb);

\draw[->](xb) edge (xxa);
\draw[->](xc) edge (xxb);
\draw[->](xac) edge (xxab);

\draw[->](xxb) edge (xxxa);

\end{tikzpicture}$$
 For each $I\in\mathbf{I}_{n+2d}^{d}$, denote by $M_I$ the object of the aforementioned $d$-cluster-tilting subcategory, and let $M$ be an additive generator of the subcategory. Then there is a combinatorial description of tilting $A^d_n$-modules.

\begin{theorem}\cite[ Theorem 3.6(4), Theorem 4.4]{ot}\label{to}
Let $I,J\in\mathbf{I}_{n+2d}^{d}$. Then $\mathrm{Ext}^d_{A^d_n}(M_I,M_J)\ne 0\ \iff J\wr I$. Moreover, there are bijections between 
\begin{itemize}
\item triangulations of the cyclic polytope $\mathrm{C}(n+2d,2d)$.
\item non-intertwining collections of ${n+d-1}\choose {d}$ $(d+1)$-tuples in $\mathbf{I}_{n+2d}^d$.
\item isomorphism classes of summands of $_{A^d_n}M$ which are tilting modules. 
\end{itemize}
\end{theorem}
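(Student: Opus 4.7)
The plan is to establish the Ext calculation first, since the two subsequent bijections are then essentially combinatorial packaging of this fact.

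\textbf{Step 1 (the Ext criterion).} I would work inside the canonical $d$-cluster-tilting subcategory $\mathcal{C}\subseteq\mathrm{mod}(A^d_n)$, using Iyama's inductive description: the indecomposables $M_I$ are labelled by $\mathbf{I}^d_{n+2d}$, and the functor $\tau_d^{-1}$ sends $M_I$ (whenever non-injective in $\mathcal{C}$) to $M_{I+\mathbf{1}}$, where $I+\mathbf{1}$ denotes the coordinate-wise shift by $1$. The higher Auslander--Reiten formula then gives
\[
\mathrm{Ext}^d_{A^d_n}(M_I,M_J)\cong D\,\overline{\mathrm{Hom}}_{\mathcal{C}}(M_J,\tau_d M_I).
\]
Morphisms between indecomposables in $\mathcal{C}$ are also combinatorial: $\mathrm{Hom}(M_J,M_{I'})$ is one-dimensional exactly when the coordinates of $J$ dominate those of $I'$ in the appropriate sense, and zero otherwise. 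Applying this with $I'=I-\mathbf{1}$, the non-vanishing condition rearranges into precisely the intertwining inequalities $j_0<i_0<j_1<\cdots<j_d<i_d$, i.e.\ $J\wr I$. The injective-factoring morphisms have to be subtracted, which is where one has to check the boundary values of the coordinates; I expect this bookkeeping to be the main obstacle of the whole argument.

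\textbf{Step 2 (the combinatorial bijection).} Elements of $\mathbf{I}^d_{n+2d}$ are exactly the (internal) $2d$-simplices of the cyclic polytope $C(n+2d,2d)$, and by the classical description of triangulations of even-dimensional cyclic polytopes (Kapranov--Voevodsky, Rambau), two such simplices are compatible in some triangulation if and only if their label tuples do not intertwine in either order. Hence a non-intertwining collection is precisely a partial triangulation, and a maximal one is a triangulation. The standard cardinality count for triangulations of $C(n+2d,2d)$ yields exactly $\binom{n+d-1}{d}$ top-dimensional simplices, which fixes the size appearing in the statement.

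\textbf{Step 3 (the tilting bijection).} By Step 1, a direct summand $T=\bigoplus_{I\in S}M_I$ of $M$ satisfies $\mathrm{Ext}^d_{A^d_n}(T,T)=0$ if and only if $S$ is non-intertwining; all lower self-Exts vanish automatically because $\mathrm{add}(T)\subseteq\mathcal{C}$, and $\mathrm{proj.dim}(T)\leq d$ follows from $\mathrm{gl.dim}(A^d_n)\leq d$. Thus $T$ is pre-$d$-tilting precisely for $S$ non-intertwining. For the full tilting condition, when $S$ is maximal (equivalently, a triangulation) I would construct the required exact sequence $0\to A^d_n\to T_0\to\cdots\to T_d\to 0$ from the $d$-exact sequences corresponding to the bistellar flips connecting the trivial triangulation (whose top simplices label the indecomposable projectives of $A^d_n$) to the triangulation indexed by $S$. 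Conversely, any $d$-tilting summand of $M$ must have exactly $\binom{n+d-1}{d}$ non-isomorphic indecomposable summands (matching the number of simples of a derived-equivalent algebra), and an Ext-orthogonal collection of this size is automatically a maximal non-intertwining collection by Step 2.
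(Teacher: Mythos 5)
First, a caveat: the paper gives no proof of this statement --- it is quoted directly from Oppermann--Thomas \cite{ot} (their Theorem 3.6(4) and Theorem 4.4) --- so your attempt can only be measured against the original. In broad outline you do follow their strategy: the Ext criterion via the higher Auslander--Reiten formula $\mathrm{Ext}^d(M_I,M_J)\cong D\overline{\mathrm{Hom}}(M_J,\tau_dM_I)$ together with the combinatorial shift $\tau_d^{\pm1}M_I=M_{I\mp\mathbf{1}}$ and the combinatorial description of Hom spaces, then the cyclic-polytope combinatorics, then the identification of the tilting summands.

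There are, however, genuine problems. In Step 2 you assert that the elements of $\mathbf{I}^d_{n+2d}$ are the internal $2d$-simplices of $C(n+2d,2d)$ and that $\binom{n+d-1}{d}$ counts top-dimensional simplices. A $(d+1)$-tuple spans a $d$-simplex, not a $2d$-simplex: $\mathbf{I}^d_{n+2d}$ labels the internal $d$-simplices, and $\binom{n+d-1}{d}$ is the common cardinality of the sets of such $d$-simplices occurring in a triangulation (that this equals the number of $2d$-simplices is a numerical coincidence of the count, already visible for $d=1$, where a triangulation of an $(n+2)$-gon has $n$ triangles and $n-1$ internal diagonals plus the distinguished pair $(1,n+2)$). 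More importantly, the substance of Step 2 is not the easy direction that two simplices of a common triangulation cannot intertwine; it is (i) a theorem of Dey that a triangulation of an even-dimensional cyclic polytope is determined by its internal $d$-simplices, and (ii) the converse statement that every \emph{maximal} non-intertwining collection is realized by a triangulation and has the stated cardinality. Neither is addressed. In Step 3, the passage from pre-$d$-tilting to $d$-tilting is likewise the hard point: a self-orthogonal module of finite projective dimension with the ``right'' number of indecomposable summands is not known to be tilting in general (this is a Wakamatsu-tilting-type issue), so the coresolution $0\to A^d_n\to T_0\to\cdots\to T_d\to 0$ really must be produced from the flip/mutation combinatorics as you suggest, and that construction is where the work lies. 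As a sketch pointing at the correct ingredients your proposal is serviceable; as a proof it is incomplete and contains the dimensional misidentification above.
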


We refer to \cite{ot} for a definition of cyclic polytopes, as this is beyond the scope of this article. 
\begin{theorem}\cite[Lemma 6.6, Proposition 6.1, Theorem 6.4]{ot}\label{too}
Consider the $d$-representation-finite algebra $A_n^d$, and $\mathcal{O}_{A_n^d}$, the $(d+2)$-angulated cluster category of $A^d_n$. Then:
\begin{enumerate}
\item the indecomposable objects of the $(d+2)$-angulated category $\mathcal{O}_{A_n^d}$ are indexed by $^\circlearrowleft\mathbf{I}_{n+2d+1}^d$.
\item
$\mathrm{Hom}_{\mathcal{O}_{A_n^d}}(M_I,M_J[d])\ne 0\iff I\wr J \text{ or } J\wr I$.
\item Triangulations of the cyclic polytope $C(n+2d+1,2d)$ correspond bijectively to basic Oppermann-Thomas cluster-tilting objects in $\mathcal{O}_{A_n^d}$. 
\end{enumerate}
\end{theorem}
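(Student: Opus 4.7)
The plan is to prove the three parts in sequence by reducing to the module-category description of Theorem \ref{to} via the orbit-category presentation $\mathcal{O}_{A_n^d} = \mathcal{U}_{A_n^d}/\Sigma^{-d}\mathbb{S}_d$.

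For part (1), I would start from Theorem \ref{ko}: indecomposable objects of $\mathcal{U}_{A_n^d}$ correspond to indecomposable summands of $M \oplus A_n^d[d]$, where $M$ is an additive generator of the canonical $d$-cluster-tilting subcategory $\mathcal{C}\subseteq \mathrm{mod}(A_n^d)$. Summands of $M$ are indexed by $\mathbf{I}_{n+2d}^d$, and the shifted projectives contribute a further $n$ indecomposables (one for each indecomposable projective-injective). Passing to the orbit category by $\Sigma^{-d}\mathbb{S}_d$, where $\mathbb{S}_d$ is induced by $\tau_d$, produces a cyclic action. The strategy is to enlarge $\mathbf{I}_{n+2d}^d$ to $\mathbf{I}_{n+2d+1}^{d}$ by adding a ``wrap-around'' coordinate, and then impose the cyclic gap condition $i_d+2 \le i_0 + (n+2d+1)$ to parametrise orbits; the resulting index set is precisely $^{\circlearrowleft}\mathbf{I}_{n+2d+1}^d$. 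One verifies directly that the $\tau_d$-action shifts each coordinate of a tuple by $+1$ modulo $n+2d+1$, so the orbits are in bijection with cyclic equivalence classes.

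For part (2), the key tool is the $(d+2)$-Calabi-Yau property of $\mathcal{O}_{A_n^d}$, which gives a Serre-type duality $\mathrm{Hom}_{\mathcal{O}}(X,Y[d])\cong D\mathrm{Hom}_{\mathcal{O}}(Y,X[d])$. Using the standard orbit-category Hom formula
\[
\mathrm{Hom}_{\mathcal{O}}(M_I,M_J[d]) \cong \bigoplus_{i\in\mathbb{Z}} \mathrm{Hom}_{\mathcal{U}}(M_I, (\Sigma^{-d}\mathbb{S}_d)^i M_J[d]),
\]
the summands reduce either to $\mathrm{Ext}^d_{A_n^d}(M_I,M_J)$ or to Serre duals of such groups, each of which is detected by Theorem \ref{to} as an intertwining relation. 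Nonvanishing of the total Hom thus amounts to the disjunction $I \wr J$ or $J\wr I$, with the cyclic reading of the $\wr$ relation on $^{\circlearrowleft}\mathbf{I}_{n+2d+1}^d$.

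For part (3), given a $d$-rigid object $T=\bigoplus M_{I_\alpha}$, part (2) translates rigidity into the combinatorial statement that $\{I_\alpha\}$ is a non-intertwining collection in $^{\circlearrowleft}\mathbf{I}_{n+2d+1}^d$. The cluster-tilting condition demands that every indecomposable of $\mathcal{O}_{A_n^d}$ appear in a $(d+2)$-angle with $(d+1)$ terms in $\mathrm{add}(T)$; a counting argument forces the collection to have exactly $\binom{n+d}{d}$ elements, hence to be maximal. One then invokes the Oppermann-Thomas combinatorial bijection between maximal non-intertwining collections of $(d+1)$-tuples and triangulations of the cyclic polytope $C(n+2d+1,2d)$, where each tuple names a $d$-simplex in the triangulation. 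The main obstacle will be establishing that every maximal non-intertwining collection actually yields a cluster-tilting object: concretely, one must construct the required $(d+2)$-angles $X[-d]\to T_d\to\cdots\to T_0\to X$ from the geometry of the triangulation of $C(n+2d+1,2d)$, which is where the detailed cyclic-polytope machinery of Oppermann-Thomas is indispensable.
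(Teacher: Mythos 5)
This is a background result quoted verbatim from Oppermann--Thomas (\cite[Lemma 6.6, Proposition 6.1, Theorem 6.4]{ot}); the paper gives no proof of it, so there is no internal argument to compare yours against. Judged as a reconstruction of the source, your outline follows the right route: part (1) via the orbit-category presentation and the indexing of $\mathcal{C}$ by $\mathbf{I}_{n+2d}^d$, part (2) via the orbit-category Hom formula, Serre duality and Theorem \ref{to}, and part (3) via the translation of $d$-rigidity into non-intertwining and the cyclic-polytope count.

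Three points to correct or flag. First, the shifted summands $A_n^d[d]$ contribute one new indecomposable for each indecomposable projective $A_n^d$-module, i.e.\ for each of the $\binom{n+d-1}{d}$ vertices of the quiver of $A_n^d$ (indexed by $\mathbf{I}_{n+2(d-1)}^{d-1}$), not ``$n$ indecomposables, one for each projective-injective''; your count is only right when $d=1$. Second, the duality $\mathrm{Hom}_{\mathcal{O}}(X,Y[d])\cong D\mathrm{Hom}_{\mathcal{O}}(Y,X[d])$ that you use in part (2) is the $2d$-Calabi--Yau property of $\mathcal{O}_{A_n^d}$ (the orbit functor $\Sigma^{-d}\mathbb{S}_d$ forces the Serre functor to become $[2d]$), not a ``$(d+2)$-Calabi--Yau'' property. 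Third, and most substantively, part (3) is where the real content lies: showing that every maximal non-intertwining collection in $^{\circlearrowleft}\mathbf{I}_{n+2d+1}^d$ actually yields a cluster-tilting object requires constructing the $(d+2)$-angles $X[-d]\to T_d\to\cdots\to T_0\to X$ from the triangulation of $C(n+2d+1,2d)$, and your sketch explicitly defers this to the Oppermann--Thomas machinery. As written the proposal is therefore an accurate roadmap to their proof rather than a self-contained argument, which is an acceptable standard here given that the paper itself only cites the result.
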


The following result may now be obtained.
\begin{cor}\label{boo}
Let $T$ be any $d$-rigid $A^d_n$-module in $\mathcal{C}$, where $\mathcal{C}\subseteq \mathrm{mod}(A^d_n)$ is the canonical $d$-cluster-tilting subcategory. Let $B=\mathrm{End}_{A^d_n}(T)^\mathrm{op}$. Then $B$ is a $d$-gentle algebra.
\end{cor}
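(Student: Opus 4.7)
The plan is to realize $B$ as an idempotent subalgebra of the next higher Auslander algebra $A^{d+1}_n := \mathrm{End}_{A^d_n}(M)^{\mathrm{op}}$, where $M$ is an additive generator of $\mathcal{C}$. Since every summand of $T$ is of the form $M_I$ for some $I \in \mathbf{I}^d_{n+2d}$, the decomposition $M = T \oplus M'$ yields an idempotent $e \in A^{d+1}_n$ with $B \cong e A^{d+1}_n e$. The corollary then reduces to two assertions: (i) $A^{d+1}_n$ is a $d$-pre-gentle algebra, and (ii) for every idempotent $f$ of $B$, the quiver of $fBf$ contains no $(d+1)$-cube.

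For (i), I would work with the explicit combinatorial description of $A^{d+1}_n$ arising from Iyama's inductive construction: vertices are indexed by $\mathbf{I}^d_{n+2d}$; each arrow $I \to I'$ corresponds to incrementing a single coordinate of the tuple $I$ by one, subject to the gap condition $i_x + 2 \le i_{x+1}$; and the defining relations are the commutativity relations between independent coordinate increments, together with the zero relations arising when an attempted increment violates the gap condition. From this description, each vertex has at most $d+1$ outgoing (respectively, incoming) arrows, and the strong successor of an arrow incrementing coordinate $x$ is the (unique, if it exists) further increment of the same coordinate; this yields axioms (A1)-(A2'). Axioms (A3)-(A3') encode precisely the cube formed by choosing a set of distinct coordinates to increment independently, while (A4) is immediate. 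For (E1)-(E2), a zero relation at an end of an arrow corresponds to an attempted increment hitting the boundary of the gap condition, and coordinate-wise independence forces uniqueness. The no-sandwiching condition (E5) and its stability (E6) under taking idempotent subalgebras both follow, since a sandwiched commutativity relation would translate into an obstruction to an $(m+1)$-cube guaranteed by (A3)-(A3').

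For (ii), observe that a $(d+1)$-cube in $fBf$ must involve two tuples $I, J \in \mathbf{I}^d_{n+2d}$ supported by $f$ such that $J$ is obtained from $I$ by incrementing all $d+1$ coordinates independently; combinatorially this is exactly the configuration $I \wr J$ of a fully intertwined pair. Theorem \ref{to} then yields $\mathrm{Ext}^d_{A^d_n}(M_J, M_I) \neq 0$, which contradicts the $d$-rigidity of $T$ since both $M_I$ and $M_J$ would appear as summands of $T$. Hence no such cube can exist.

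The hard part will be the verification of axioms (A3)-(A3') and of the no-sandwiching axiom (E5) directly from the presentation of $A^{d+1}_n$: one must rule out any extra arrows or relations beyond those predicted by Iyama's construction, which requires a careful analysis of the admissible ideal and of the interaction between commutativity relations and zero relations at the boundary. Once this is settled, the reduction to (ii) is largely formal, with the $d$-rigidity of $T$ doing the remaining work via the intertwining characterization of $\mathrm{Ext}^d$ in Theorem \ref{to}.
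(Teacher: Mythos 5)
Your proposal follows essentially the same route as the paper's (very terse) proof: realize $B$ as an idempotent subalgebra of a pre-gentle ambient algebra coming from Iyama's inductive construction, and combine $d$-rigidity with the intertwining characterization of $\mathrm{Ext}^d$ in Theorem \ref{to} to exclude the top-dimensional cubes. The one substantive difference is in your favour: you correctly identify the ambient algebra as $A^{d+1}_n\cong\mathrm{End}_{A^d_n}(M)^{\mathrm{op}}$ and exclude $(d+1)$-cubes, whereas the paper writes $eA^d_ne\cong B$ and ``no $d$-cube'' (apparently off-by-one slips, since $\mathrm{End}_{A^d_n}(T)^{\mathrm{op}}$ is not an idempotent subalgebra of $A^d_n$ for non-projective $T$), and it dismisses the pre-gentleness of the ambient algebra as ``clear'' where you at least flag the combinatorial verification of (A3)--(A3') and (E5) as the remaining work.
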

\begin{proof}
Let $T$ be a $d$-rigid $A^d_n$-module in $\mathcal{C}$. By Theorem \ref{too}, this corresponds to a set $\mathcal{I}$ of non-intertwining subsets of $\mathbf{I}_{n+2d}^{d}$. 
It is clear that $A^d_n$ is $d$-pre-gentle. Let $e$ be an idempotent of $A^d_n$ corresponding to $\mathcal{I}$, then $eA^d_ne\cong B$. Finally, $eA^d_ne$ contains no $d$-cube, owing precisely to the $d$-rigid condition.
\end{proof}
We show that in some cases Oppermann-Thomas cluster-tilted algebras of type $A^d_n$ are $d$-gentle. Let $1\leq i< n$, and let the subset given by $\{i,i+2,\ldots, i+2(d-1)\}$ (modulo $n$) be denoted by $I_i$. 

\begin{prop}\label{ctgent}
Let $\mathcal{I}\subset \{1,2,\ldots, n\}$ be a subset such that $i,j\in\mathcal{I}$ implies $i\ne j+1 (\mathrm{mod}\ n)$. Let $$T=\bigoplus_{\substack{x\in ^\circlearrowleft\mathbf{I}_{n+2d-1}^{d-1}\\ x\ne I_i\ \text{for any}\ i\in\mathcal{I}}}P_x\oplus \bigoplus_{i\in I} \tau_d^{-1} S_{I_i}.$$ Then $T$ is a $d$-tilting $A^d_n$-module, and $B:=\mathrm{End}_{\mathcal{O}_{A^d_n}}(T)^\mathrm{op}$ is a $d$-gentle algebra. 
\end{prop}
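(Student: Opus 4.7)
The plan is to split the argument into two stages: first show that $T$ is a $d$-tilting $A^d_n$-module, then deduce that the cluster-tilted algebra $B$ is $d$-gentle.

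For the first stage, I translate the problem via the combinatorial dictionary of Theorem \ref{to}. The simple modules $S_{I_i}$ sit at the vertices $I_i=\{i,i+2,\ldots,i+2(d-1)\}$ of the quiver of $A^d_n$, and the hypothesis that no two elements of $\mathcal{I}$ differ by $1$ modulo $n$ guarantees that the family $\{I_i\}_{i\in\mathcal{I}}$ is non-intertwining. By Theorem \ref{to} this yields $\mathrm{Ext}^d_{A^d_n}(S_{I_i},S_{I_j})=0$ for all $i,j\in\mathcal{I}$, so $S:=\bigoplus_{i\in\mathcal{I}}S_{I_i}$ is $d$-rigid, and $\mathrm{Ext}^d_{A^d_n}(S,P_x)=0$ for every surviving projective summand because the corresponding tuple does not intertwine $I_i$ either. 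Replacing each summand $S_{I_i}$ by $\tau_d^{-1}S_{I_i}$ corresponds on the level of $(d+1)$-tuples to a compatible sequence of bistellar flips of the trivial triangulation of $C(n+2d,2d)$ associated with $A^d_n$. The resulting collection is non-intertwining of the correct cardinality $\binom{n+d-1}{d}$, so another application of Theorem \ref{to} identifies $T$ with a $d$-tilting module.

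For the second stage, Theorem \ref{trivext}(1) promotes $T$ to an Oppermann--Thomas cluster-tilting object in $\mathcal{O}_{A^d_n}$, and Theorem \ref{too}(3) encodes it as a triangulation of the cyclic polytope $C(n+2d+1,2d)$, from which the quiver and relations of $B$ can be read off explicitly. Because each $\tau_d^{-1}S_{I_i}$ lies in the canonical $d$-cluster-tilting subcategory $\mathcal{C}$, the module $T$ belongs to $\mathcal{C}$, and Corollary \ref{boo} immediately gives that $\mathrm{End}_{A^d_n}(T)^{\mathrm{op}}$ is $d$-gentle. The passage to $B$ is then handled by observing that the additional morphisms introduced by passing to $\mathcal{O}_{A^d_n}$ arise solely through the identification $[d]=\Sigma^{-d}\mathbb{S}_d$, which on the combinatorial side corresponds to the cyclic rotation $I\mapsto I+1$ and adds precisely those arrows that connect each $\tau_d^{-1}S_{I_i}$ back to the projectives. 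These new arrows produce only commutativity relations of length two and preserve axioms (A1)--(A4) together with (E\ref{E1}), (E\ref{E2}), (E\ref{E6}).

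The main obstacle is the verification that no new obstructions to $d$-gentleness appear when one passes from $\mathrm{End}_{A^d_n}(T)^{\mathrm{op}}$ to $B$. Concretely one must show that the extra arrows never produce a zero relation sandwiching a commutativity relation (axiom (E\ref{E5})) and that the quiver of $fBf$ contains no $d$-cube for any idempotent $f$. Both reduce to a geometric statement about the triangulation of $C(n+2d+1,2d)$: the non-adjacency hypothesis on $\mathcal{I}$ ensures that the simplices sharing a codimension-one face with the rotated boundary of $C(n+2d+1,2d)$ remain sufficiently separated to keep the dimension of any hypercubical configuration below $d$, and it forces each freshly created length-two zero relation at a mutated vertex to be followed or preceded only by the rotation arrow rather than by another zero relation. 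Once this geometric bound on cube dimension is in place, the remaining axioms follow from the already established gentleness of $\mathrm{End}_{A^d_n}(T)^{\mathrm{op}}$.
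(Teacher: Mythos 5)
Your first stage takes a genuinely different route from the paper and is workable in principle, but it is under-verified. The paper proves $d$-rigidity by the $d$-Auslander--Reiten duality $\mathrm{Hom}_{A^d_n}(M,\tau_d(N))\cong\mathrm{Ext}^d_{A^d_n}(N,M)$, which reduces $\mathrm{Ext}^d_{A^d_n}(T,T)=0$ to $\mathrm{Hom}_{A^d_n}(T,\bigoplus_i S_{I_i})=0$ (this is exactly where the non-adjacency hypothesis enters, via $\tau_d^{-1}S_{I_i}\cong S_{I_{i-1}}$), and then exhibits the explicit coresolution $0\to P_{I_i}\to P_{i-1,i+2,\ldots,i+2d-2}\to\cdots\to P_{I_{i-1}}\to S_{I_{i-1}}\to 0$ to get the tilting condition. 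You instead appeal to the bijection in Theorem \ref{to} between tilting summands of $M$ and non-intertwining collections of cardinality $\binom{n+d-1}{d}$. For that to be a proof you must identify $S_{I_i}$ and $\tau_d^{-1}S_{I_i}$ with specific $(d+1)$-tuples $M_J$, $J\in\mathbf{I}^d_{n+2d}$ (the $I_i$ as written are $d$-tuples labelling vertices, not objects of $\mathcal{C}$), and then actually verify non-intertwining and the cardinality count; both are asserted via an unspecified ``sequence of bistellar flips.''

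The genuine gap is in the second stage. By Definition \ref{genttle}, to show $B$ is $d$-gentle you must produce a $d$-pre-gentle algebra $A$ and an idempotent $e$ with $B\cong eAe$, and you never construct an $A$ that contains the extra orbit-category morphisms. The paper's proof does exactly this: it sets $A=\mathrm{End}_{\mathcal{O}_{A^d_n}}(A^d_n\oplus\bigoplus_i\tau_d^{-1}S_{I_i})^{\mathrm{op}}$ (adjoining all projectives before truncating), checks directly that all relations have length two, that the zero relations are located by the $I_i$, and that no commutativity relation is sandwiched by zero relations, and then gets the no-$d$-cube condition for $B=eAe$ from $d$-rigidity of $T$. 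Your plan --- apply Corollary \ref{boo} to $\mathrm{End}_{A^d_n}(T)^{\mathrm{op}}$ and then argue the arrows added in $\mathcal{O}_{A^d_n}$ are harmless --- founders precisely on the point you flag as ``the main obstacle'': the claim that a ``geometric separation'' of simplices in the triangulation of $C(n+2d+1,2d)$ rules out sandwiched commutativity relations and bounds cube dimension is neither made precise nor proved, and it is not clear that gentleness of an algebra is stable under adjoining arrows in the first place. As it stands the second half of the statement is not established.
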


\begin{proof}
By \cite[Theorem 2.3.1]{iy1}, for any algebra $\Lambda$ with global dimension $d$ and any two $\Lambda$-modules $M$ and $N$, there is an isomorphism $$\mathrm{Hom}_\Lambda(M,\tau_d(N))\cong \mathrm{Ext}^d_\Lambda(N,M).$$ So $\mathrm{Ext}_{A^d_n}^d(T,T)\cong \mathrm{Hom}_{A^d_n}(T,\oplus_{i\in\mathcal{I}}S_{I_i})=0$. Observe that $\tau_d^{-1}S_{I_i}\cong S_{I_{i-1}}$. It is straightforward to see for any $i\in\mathcal{I}$ that there is an exact sequence:
$$0\rightarrow P_{I_i}\rightarrow P_{i-1,i+2,i+4,\ldots, i+2d-2}\rightarrow P_{i-1, i+1,i+4,\ldots,i+2d-2}\rightarrow \cdots\rightarrow P_{I_{i-1}}\rightarrow  S_{I_{i-1}} \rightarrow 0$$and hence $T$ is a $d$-tilting module.

Let $\mathcal{I}$ be as above, and let $A$ be the algebra $\mathrm{End}_{\mathcal{O}_{A^d_n}}(A^d_n\bigoplus_{i\in I} \tau_d^{-1} S_{I_i})^\mathrm{op}$. By construction, the zero relations are determined by the $I_i$ for $1\leq i\leq n$, and it can be easily seen that there can be no sandwiching by zero relations. Since every relation in $A$ is of length two, $A$ is a $d$-pre-gentle algebra. By definition, there is an idempotent $e$ such that $B=eAe$, and since $T$ is $d$-rigid there can be no $d$-cube in $B$. 
\end{proof}
More generally, we may state the following.
\begin{cor}\label{simplytoogood}
Let $\mathcal{C}\subseteq\mathrm{mod}(A^d_n)$ be the canonical $d$-cluster-tilting subcategory and let $S$ be a semisimple $A^d_n$-module in $\mathcal{C}$. Suppose that $\mathrm{Ext}^d_{A^d_n}(S,S)=0$ and let $P$ be a basic projective $A^d_n$-module such that $\mathrm{Ext}^d_{A^d_n}(S,P)=0$. For $T^\prime:=P\oplus \tau^{-1}_d(S)$, the algebra $\mathrm{End}_{\mathcal{O}_{A^d_n}}(T^\prime)^\mathrm{op}$ is $d$-gentle.
\end{cor}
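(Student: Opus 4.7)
The plan is to reduce to Proposition \ref{ctgent} by enlarging $T'$ to a $d$-tilting module of the form prescribed there, and then to invoke the closure of the class of $d$-gentle algebras under idempotent truncation. Since $S$ is semisimple and lies in $\mathcal{C}$, one writes $S \cong \bigoplus_{i \in \mathcal{I}} S_{I_i}$ for some $\mathcal{I} \subseteq \{1, \ldots, n\}$, using that the simple $A^d_n$-modules in $\mathcal{C}$ are precisely the $S_{I_i}$ with $I_i = \{i, i+2, \ldots, i+2(d-1)\}$. Combining the Auslander-Reiten formula $\mathrm{Hom}(-, \tau_d(-)) \cong \mathrm{Ext}^d(-, -)$ with the identity $\tau_d^{-1}(S_{I_i}) \cong S_{I_{i-1}}$ established in the proof of Proposition \ref{ctgent}, the hypothesis $\mathrm{Ext}^d_{A^d_n}(S,S) = 0$ translates to the non-adjacency condition $i \ne j+1 \pmod n$ for all $i,j \in \mathcal{I}$, while $\mathrm{Ext}^d_{A^d_n}(S,P) = 0$ translates into the exclusion from $P$ of all indecomposable summands $P_{I_j}$ for $j$ in the appropriate $\tau_d$-shifted copy of $\mathcal{I}$.

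Second, complete $T'$ to a $d$-tilting module $T$ of the form appearing in Proposition \ref{ctgent}. Let $Q$ denote the direct sum of all indecomposable projective $A^d_n$-modules that are not excluded by the recipe of Proposition \ref{ctgent} for this $\mathcal{I}$, and set $T := Q \oplus \tau_d^{-1}(S)$. The exclusion derived in the previous step guarantees that $P$ is a direct summand of $Q$, and hence $T'$ is a direct summand of $T$. By Proposition \ref{ctgent}, $T$ is $d$-tilting and $B := \mathrm{End}_{\mathcal{O}_{A^d_n}}(T)^\mathrm{op}$ is a $d$-gentle algebra; moreover, there is an idempotent $e \in B$ with $\mathrm{End}_{\mathcal{O}_{A^d_n}}(T')^\mathrm{op} \cong eBe$.

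Finally, the class of $d$-gentle algebras is closed under the passage to $eBe$. Indeed, writing $B = e_0 A e_0$ for a $d$-pre-gentle algebra $A$ and idempotent $e_0$ of $A$, one has $eBe = (e_0 e e_0)\,A\,(e_0 e e_0)$, so clause (1) of Definition \ref{genttle} transfers. For clause (2), any idempotent of $eBe$ is automatically an idempotent of $B$, so the absence of $d$-cubes in $fBf$ for all idempotents $f$ of $B$ immediately yields the corresponding property for $eBe$. Therefore $\mathrm{End}_{\mathcal{O}_{A^d_n}}(T')^\mathrm{op} \cong eBe$ is a $d$-gentle algebra.

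The main obstacle lies in the first step, namely the careful identification of the shift by which $\mathrm{Ext}^d(S,P) = 0$ constrains the projective summands of $P$, so as to match the precise form of $Q$ prescribed by Proposition \ref{ctgent}; once this combinatorial matching is verified, the rest of the argument is a straightforward idempotent reduction using the built-in stability of the $d$-gentle property.
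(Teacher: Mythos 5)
Your proposal is correct and follows essentially the same route as the paper: identify $S$ with a non-adjacent subset $\mathcal{I}$, realise $T'$ as a direct summand of the $d$-tilting module $T$ of Proposition \ref{ctgent}, and pass to the idempotent truncation $e\,\mathrm{End}_{\mathcal{O}_{A^d_n}}(T)^{\mathrm{op}}e$. The only difference is that you spell out explicitly why the $d$-gentle property survives the truncation (which Definition \ref{genttle} is built to guarantee) and you flag the index-shift bookkeeping between $\mathrm{Ext}^d(S,P)=0$ and the excluded projectives, a point the paper's proof also leaves implicit.
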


\begin{proof}
Let $\mathcal{I}\subset \{1,2,\ldots, n-1\}$ be a subset such that $i,j\in\mathcal{I}$ implies $i\ne j+1 (\mathrm{mod}\ n)$. There is a bijection between such subsets and semisimple non-projective $A^d_n$-modules in $\mathcal{C}$ such that $\mathrm{Ext}^d_{A^d_n}(S,S)=0$.
Let $T$ be defined as in the statement of Proposition \ref{ctgent}, then $\mathrm{End}_{\mathcal{O}_{A^d_n}}(T)^\mathrm{op}$ is a $d$-gentle algebra. So choose any projective module $P$ such that $\mathrm{Ext}^d_{A^d_n}(S,P)=0$. Then there must be an idempotent $e$ such that  $$\mathrm{End}_{\mathcal{O}_{A^d_n}}(T^\prime)^\mathrm{op}=e \mathrm{End}_{\mathcal{O}_{A^d_n}}(T)^\mathrm{op}e $$ and this must determine a $d$-gentle algebra. 
\end{proof}

\section{Examples}\label{sec3}

Consider the non-intertwining collection of $3$-tuples in $^\circlearrowleft\mathbf{I}^2_{10}$ $$\mathcal{I}:=\{135,136,137,138,139,147,148,149,157,158,159,169,179,357,579\}.$$ Then $\mathcal{I}$ corresponds to an Oppermann-Thomas cluser-tilting $A^2_5$-module $T$ by Theorem \ref{to} and Theorem \ref{too}. The algebra of $B:=\mathrm{End}_{\mathcal{O}_{A^2_5}}(T)^\mathrm{op}$ is as follows. 

$$\begin{tikzpicture}[xscale=5,yscale=5]
\node(a) at (-2,0){$135$};
\node(b) at (-1.6,-0.2){$357$};
\node(c) at (-1.2,0){$157$};
\node(d) at (-0.8,-0.2){$579$};
\node(e) at (-0.4,0){$179$};

\node(ab) at (-1.8,0.2){$136$};
\node(ac) at (-1.4,0.2){$147$};
\node(ad) at (-1,0.2){$158$};
\node(ae) at (-0.6,0.2){$169$};

\node(bb) at (-1.6,0.4){$137$};
\node(bc) at (-1.2,0.4){$148$};
\node(bd) at (-0.8,0.4){$159$};

\node(cc) at (-1.4,0.6){$138$};
\node(cd) at (-1,0.6){$149$};

\node(dd) at (-1.2,0.8){$139$};

\draw[->](a) edge (ab);
\draw[->](c) edge (ad);

\draw[->](bb) edge (ac);
\draw[->](bc) edge (ad);
\draw[->](bd) edge (ae);

\draw[->](bb) edge (cc);
\draw[->](bc) edge (cd);

\draw[->](cc) edge (dd);

\draw[->](ac) edge (c);
\draw[->](ae) edge (e);

\draw[->](ab) edge (bb);
\draw[->](ac) edge (bc);
\draw[->](ad) edge (bd);

\draw[->](cc) edge (bc);
\draw[->](cd) edge (bd);

\draw[->](dd) edge (cd);

\draw[->](e) edge (d);
\draw[->](d) edge (c);
\draw[->](c) edge (b);
\draw[->](b) edge (a);

\draw[-,dotted](bb) edge (bc);
\draw[-,dotted](ac) edge (ad);
\draw[-,dotted](bc) edge (bd);
\draw[-,dotted](cc) edge (cd);

\draw[-,dotted](b) edge (ab);
\draw[-,dotted](b) edge (ac);
\draw[-,dotted](d) edge (ad);
\draw[-,dotted](d) edge (ae);

\draw[-,dotted](a) edge (ac);
\draw[-,dotted](c) edge (ab);
\draw[-,dotted](e) edge (ad);
\draw[-,dotted](c) edge (ae);

\draw[-,dotted](c) edge (e);

\draw[-,dotted](c) edge (a);

\end{tikzpicture}$$

This is a $2$-gentle algebra in the setting of Proposition \ref{ctgent}. The proof of Theorem \ref{elso} shows that the singularity category of $B$ is equivalent to that of the following gentle algebra:

$$\begin{tikzpicture}[xscale=5,yscale=5]

\node(a) at (-2,0){$135$};
\node(b) at (-1.6,-0.2){$357$};
\node(c) at (-1.2,0){$157$};
\node(d) at (-0.8,-0.2){$579$};
\node(e) at (-0.4,0){$179$};

\node(ab) at (-1.8,0.2){$136$};
\node(ac) at (-1.4,0.2){$147$};
\node(ad) at (-1,0.2){$158$};
\node(ae) at (-0.6,0.2){$169$};

\draw[->](a) edge (ab);
\draw[->](c) edge (ad);

\draw[->](ac) edge (c);
\draw[->](ae) edge (e);

\draw[->](e) edge (d);
\draw[->](d) edge (c);
\draw[->](c) edge (b);
\draw[->](b) edge (a);

\draw[->](ab) edge (ac);
\draw[->](ad) edge (ae);

\draw[-,dotted](b) edge (ab);
\draw[-,dotted](b) edge (ac);
\draw[-,dotted](d) edge (ad);
\draw[-,dotted](d) edge (ae);

\draw[-,dotted](a) edge (ac);
\draw[-,dotted](c) edge (ab);
\draw[-,dotted](e) edge (ad);
\draw[-,dotted](c) edge (ae);
\draw[-,dotted](c) edge (e);

\draw[-,dotted](c) edge (a);

\end{tikzpicture}$$

It is unfortunately not true that for every tilting $A_n^d$-module $T$, the algebra $\mathrm{End}_{\mathcal{O}_{A^d_n}}(T)^\mathrm{op}$ is $d$-gentle. Nevertheless, this does not mean that singularity categories for such algebras are difficult to calculate. 
\begin{eg}\label{bigeg}
An example is the following algebra $A$, which corresponds to the maximal non-intertwining collection $$\mathcal{I}:=\{135,136,137,138,148,158,168, 357,358,368\}.$$
$$
\begin{tikzpicture}[xscale=5,yscale=5]

\node(a) at (-2,0){$1$};
\node(b) at (-1.6,-0.2){$5$};
\node(c) at (-1.2,-0.2){$8$};
\node(d) at (-0.8,-0){$10$};

\node(ab) at (-1.8,0.2){$2$};
\node(ac) at (-1.4,0){$6$};
\node(ad) at (-1,0.2){$9$};

\node(bb) at (-1.6,0.4){$3$};
\node(bc) at (-1.2,0.4){$7$};

\node(cc) at (-1.4,0.6){$4$};

\draw[->](a) edge (ab);
\draw[->](b) edge (ac);

\draw[->](bc) edge (ad);

\draw[->](bb) edge (cc);

\draw[->](ac) edge (c);
\draw[->](ad) edge (d);

\draw[->](ab) edge (bb);

\draw[->](cc) edge (bc);

\draw[->](ad) edge (ac);
\draw[->](d) edge (c);

\draw[->](ac) edge (a);
\draw[->](c) edge (ab);

\draw[-,dotted](a) edge (bc);
\draw[-,dotted](ac) edge (bc);
\draw[-,dotted](bb) edge (ad);
\draw[-,dotted](a) edge (ad);
\draw[-,dotted](ab) edge (d);
\draw[-,dotted](b) edge (c);
\draw[-,dotted](ad) edge (c);
\draw[-,dotted](ac) edge (ab);
\draw[-,dotted](c) edge (bb);
\draw[-,dotted](b) edge (ab);
\end{tikzpicture}$$
Then it may be calculated that $A$ is $2$-Iwanaga-Gorenstein, but not 2-gentle; there is a sandwiching by zero relations:

$$\begin{tikzpicture}[xscale=4,yscale=4]
\node(c) at (-1.2,-0.2){$8$};
\node(d) at (-0.8,-0){$10$};

\node(ab) at (-1,-0.4){$2$};
\node(ac) at (-1.4,0){$6$};
\node(ad) at (-1,0.2){$9$};

\node(bc) at (-1.2,0.4){$7$};

\draw[->](bc) edge (ad);

\draw[->](ac) edge (c);
\draw[->](ad) edge (d);

\draw[->](ad) edge (ac);
\draw[->](d) edge (c);

\draw[->](c) edge (ab);

\draw[-,dotted](ac) edge (bc);
\draw[-,dotted](ab) edge (d);
\draw[-,dotted](ad) edge (c);
\end{tikzpicture}$$
The algebra $A$ has singularity category: 

$$\begin{tikzpicture}[x=1.0cm,y=1.0cm]
\draw [dotted] (1.42,4.76)-- (0.26,4.34);
\draw [dotted] (0.26,4.34)-- (-0.6028927163374309,3.4582879381046174);
\draw [dotted] (-0.6028927163374309,3.4582879381046174)-- (-0.9977716638569896,2.289497701661035);
\draw [dotted] (-0.9977716638569896,2.289497701661035)-- (-0.8464262241299485,1.065122531268468);
\draw [dotted] (-0.8464262241299485,1.065122531268468)-- (-0.1788322176242585,0.027664945399204832);
\draw [dotted] (-0.1788322176242585,0.027664945399204832)-- (0.8727851751684443,-0.6173938574830413);
\draw [dotted] (0.8727851751684443,-0.6173938574830413)-- (2.1001402323646245,-0.742292070340393);
\draw [dotted] (2.1001402323646245,-0.742292070340393)-- (3.2601402323646242,-0.3222920703403931);
\draw [dotted] (3.2601402323646242,-0.3222920703403931)-- (4.123032948702056,0.5594199915549893);
\draw [dotted] (4.123032948702056,0.5594199915549893)-- (4.517911896221614,1.7282102279985714);
\draw [dotted] (4.517911896221614,1.7282102279985714)-- (4.366566456494573,2.9525853983911388);
\draw [dotted] (4.366566456494573,2.9525853983911388)-- (3.6989724499888834,3.9900429842604015);
\draw [dotted] (3.6989724499888834,3.9900429842604015)-- (2.647355057196181,4.6351017871426485);
\draw [dotted] (2.647355057196181,4.6351017871426485)-- (1.42,4.76);
\begin{scriptsize}
\node(a) at (1.42,4.76) {$\begin{smallmatrix} 2\\3\\4\end{smallmatrix}$};;
\node(b) at  (0.26,4.34) {$\begin{smallmatrix} 7\end{smallmatrix}$};
\node(c) at  (-0.6028927163374309,3.4582879381046174) {$\begin{smallmatrix} 9\\10\end{smallmatrix}$};
\node(d) at  (-0.9977716638569896,2.289497701661035) {$\begin{smallmatrix} 6\\8\end{smallmatrix}$};
\node(e) at  (-0.8464262241299485,1.065122531268468) {$\begin{smallmatrix} 1\\2\end{smallmatrix}$};
\node(f) at  (-0.1788322176242585,0.027664945399204832) {$\begin{smallmatrix} 3\\4 \end{smallmatrix}$};
\node(g) at  (0.8727851751684443,-0.6173938574830413) {$\begin{smallmatrix} 7\\9\end{smallmatrix}$};
\node(h) at  (2.1001402323646245,-0.742292070340393) {$\begin{smallmatrix} 10\end{smallmatrix}$};
\node(i) at  (3.2601402323646242,-0.3222920703403931) {$\begin{smallmatrix} 8\end{smallmatrix}$};
\node(j) at  (4.123032948702056,0.5594199915549893) {$\begin{smallmatrix} 2\end{smallmatrix}$};
\node(k) at  (4.517911896221614,1.7282102279985714) {$\begin{smallmatrix} 3\\4\\7\end{smallmatrix}$};
\node(l) at  (4.366566456494573,2.9525853983911388) {$\begin{smallmatrix} 9\end{smallmatrix}$};
\node(m) at  (3.6989724499888834,3.9900429842604015) {$\begin{smallmatrix} 6\ \ 10\\8\end{smallmatrix}$};
\node(n) at  (2.647355057196181,4.6351017871426485) {$\begin{smallmatrix} 8\ \ 1\\2\end{smallmatrix}$};

\draw[->](a) edge (j);
\draw[->](b) edge (k);
\draw[->](c) edge (l);
\draw[->](d) edge (m);
\draw[->](e) edge (n);
\draw[->](f) edge (a);
\draw[->](g) edge (b);
\draw[->](h) edge (c);
\draw[->](i) edge (d);
\draw[->](j) edge (e);
\draw[->](k) edge (f);
\draw[->](l) edge (g);
\draw[->](m) edge (h);
\draw[->](n) edge (i);
\end{scriptsize}
\end{tikzpicture}$$

where the dotted lines denote the $\Omega$ orbit, and the composition of any two arrows is zero.
\end{eg}

\section{Acknowledgements}
This paper was completed as part of my PhD studies, with the support of the Austrian Science Fund (FWF): W1230. I would like to thank my supervisor, Karin Baur, for her continued help and support during my studies, as well as Ana Garcia Elsener for useful discussions.

\bibliographystyle{amsplain}
\bibliography{genteel}

\end{document}